\newtheorem{theorem}{Theorem}[section]
\newtheorem{conjecture}{Conjecture}[section]
\newtheorem{lemma}{Lemma}[section]
\newtheorem{corollary}{Corollary}[section]
\begin{document}

\title{Congruences for the coefficients of the Gordon and McIntosh mock theta function $\xi(q)$}

\author{Robson da Silva and James A. Sellers}

\date{}
\maketitle

\begin{abstract}
Recently Gordon and McIntosh introduced the third order mock theta function $\xi(q)$ defined by 
$$
\xi(q)=1+2\sum_{n=1}^{\infty}\frac{q^{6n^2-6n+1}}{(q;q^6)_{n}(q^5;q^6)_{n}}.
$$
Our goal in this paper is to study arithmetic properties of the coefficients of this function. We present a number of such properties, including several infinite families of Ramanujan--like congruences.
\end{abstract}

\noindent {\bf Keywords}: congruence, generating function, mock theta function

\noindent {\bf Mathematics Subject Classification 2010}: 11P83, 05A17

\section{Introduction}

In his last letter to Hardy in 1920, Ramanujan introduced the notion of a mock theta function. He listed 17 such functions having orders 3, 5, and 7. Since then, other mock theta functions have been found. Gordon and McIntosh \cite{G-M}, for example, introduced many additional such functions, including the following of order 3:
\begin{equation}
\xi(q)=1+2\sum_{n=1}^{\infty}\frac{q^{6n^2-6n+1}}{(q;q^6)_{n}(q^5;q^6)_{n}},
\label{xi}
\end{equation}
where we use the standard $q$-series notation:
\begin{align*}
(a;q)_0 & = 1, \\
(a;q)_n & = (1-a)(1-aq) \cdots (1-aq^{n-1}), \forall n \geq 1, \\ 
(a;q)_{\infty} & = \lim_{n \to \infty} (a;q)_n, |q|<1.
\end{align*}

Arithmetic properties of the coefficients of mock theta functions have received a great deal of attention. For instance, Zhang and Shi \cite{Z-S} recently proved seven congruences satisfied by the coefficients of the mock theta function $\beta(q)$ introduced by McIntosh. In a recent paper, Brietzke, da Silva, and Sellers \cite{BSS} found a number of arithmetic properties satisfied by the coefficients of the mock theta function $V_0(q)$, introduced by Gordon and McIntosh \cite{G-M-0}. Andrews et al. \cite{APSY} prove a number of congruences for the partition functions $p_{\omega}(n)$ and $p_{\nu}(n)$, introduced in \cite{Andrews1}, associated with the third order mock theta functions $\omega(q)$ and $\nu(q)$, where $\omega(q)$ is defined below and 
$$\nu(q) = \sum_{n=0}^{\infty} \frac{q^{n(n+1)}}{(-q;q^2)_{n+1}}.$$
In a subsequent paper, Wang \cite{Wang} presented some additional congruences for both $p_{\omega}(n)$ and $p_{\nu}(n)$.

This paper is devoted to exploring arithmetic properties of the coefficients $p_{\xi}(n)$ defined by 
\begin{equation}
\sum_{n=0}^{\infty} p_{\xi}(n)q^n = \xi(q).
\label{Eq4}
\end{equation} 
It is clear from \eqref{xi} that $p_{\xi}(n)$ is even for all $n \geq 1$. In Sections \ref{Main} and \ref{sec:addl_congs}, we present other arithmetic properties of $p_{\xi}(n)$, including some infinite families of congruences.

\section{Preliminaries}
\label{sec:preliminaries}

McIntosh \cite[Theorem 3]{McIntosh} proved a number of mock theta conjectures, including 
\begin{align}
\omega(q) & = g_3(q,q^2) \textrm{\ \ \ and}  \label{Eq1} \\
\xi(q) & = q^2g_3(q^3,q^6) + \frac{(q^2;q^2)_{\infty}^{4}}{(q;q)_{\infty}^2(q^6;q^6)_{\infty}}, \label{Eq2}
\end{align}
where
$$g_3(a,q) = \sum_{n=0}^{\infty} \frac{(-q;q)_n q^{n(n+1)/2}}{(a;q)_{n+1}(a^{-1}q;q)_{n+1}}$$
and $\omega(q)$ is the third order mock theta functions given by
$$\omega(q) = \sum_{n=0}^{\infty} \frac{q^{2n(n+1)}}{(q;q^2)_{n+1}^2}.$$
It follows from \eqref{xi}, \eqref{Eq1}, and \eqref{Eq2} that
\begin{equation}
\xi(q) = q^2\omega(q^3) + \frac{(q^2;q^2)_{\infty}^{4}}{(q;q)_{\infty}^2(q^6;q^6)_{\infty}}.
\label{Eq3}
\end{equation}

Throughout the remainder of this paper, we define 
$$f_k := (q^k;q^k)_{\infty}$$
in order to shorten the notation. Combining \eqref{Eq3} and \eqref{Eq4}, we have
\begin{equation}
\sum_{n=0}^{\infty} p_{\xi}(n)q^n = q^2\omega(q^3) + \frac{f_2^{4}}{f_1^2f_6}.
\label{Eq5}
\end{equation}

We recall Ramanujan's theta functions
\begin{align}
f(a,b) & :=\sum_{n=-\infty}^\infty a^\frac{n(n+1)}{2}b^\frac{n(n-1)}{2}, \mbox{ for } |ab|<1, \nonumber \\
\phi(q) & := f(q,q) = \sum_{n=-\infty}^{\infty} q^{n^2} = \frac{f_2^5}{f_1^2f_4^2}, \textrm{\ \ and} \label{Rama1} \\
\psi(q) & := f(q,q^3) = \sum_{n=0}^{\infty} q^{n(n+1)/2} = \frac{f_2^2}{f_1}. \label{Rama2}
\end{align}

The function $\phi(q)$ satisfies many identities, including (see \cite[(22.4)]{Berndt})
\begin{equation}
{\phi(-q)} = \frac{f_1^2}{f_2}. \label{Eq18}
\end{equation}

In some of the proofs, we employ the classical Jacobi's identity (see \cite[Theorem 1.3.9]{Berndt1})
\begin{equation}
f_1^3 = \sum_{n=0}^{\infty} (-1)^n (2n+1) q^{n(n+1)/2}.
\label{Jacobi}
\end{equation}

We note the following identities which will be used below.  

\begin{lemma} The following 2-dissection identities hold.
\begin{eqnarray}
\frac{1}{f_{1}^{2}} & = & \frac{f_{8}^{5}}{f_{2}^{5}f_{16}^{2}} + 2q\frac{f_{4}^{2}f_{16}^{2}}{f_{2}^{5}f_{8}}, \label{Eq17} \\
{f_{1}^{2}} & = & \frac{f_{2}f_8^5}{f_{4}^{2}f_{16}^{2}} -2q\frac{f_{2}f_{16}^{2}}{f_{8}}, \label{Eq31} \\
\frac{1}{f_{1}^{4}} & = & \frac{f_{4}^{14}}{f_{2}^{14}f_{8}^{4}} + 4q\frac{f_{4}^{2}f_{8}^{4}}{f_{2}^{10}}, \label{Eq29} \\
\displaystyle\frac{f_{3}}{f_{1}} & = & \displaystyle\frac{f_{4}f_{6}f_{16}f_{24}^{2}}{f_{2}^{2}f_{8}f_{12}f_{48}} + q\displaystyle\frac{f_{6}f_{8}^{2}f_{48}}{f_{2}^{2}f_{16}f_{24}}, \label{Eq8} \\
\displaystyle\frac{f_{3}^2}{f_{1}^2} & = & \displaystyle\frac{f_{4}^4f_{6}f_{12}^{2}}{f_{2}^{5}f_{8}f_{24}} + 2q\displaystyle\frac{f_{4}f_{6}^{2}f_{8}f_{24}}{f_{2}^{4}f_{12}}, \label{Eq9} \\
\displaystyle\frac{f_{1}^3}{f_{3}} & = & \displaystyle\frac{f_{4}^3}{f_{12}} - 3q\displaystyle\frac{f_{2}^{2}f_{12}^3}{f_{4}f_{6}^2} \label{Eq45} \\
\displaystyle\frac{f_{3}}{f_{1}^3} & = & \displaystyle\frac{f_{4}^6f_{6}^3}{f_{2}^{9}f_{12}^2} + 3q\displaystyle\frac{f_{4}^2f_{6}f_{12}^2}{f_{2}^{7}}, \label{Eq11} \\
\displaystyle\frac{1}{f_{1}f_{3}} & = & \displaystyle\frac{f_{8}^{2}f_{12}^{5}}{f_{2}^{2}f_{4}f_{6}^{4}f_{24}^{2}} + q\displaystyle\frac{f_{4}^{5}f_{24}^{2}}{f_{2}^{4}f_{6}^{2}f_{8}^{2}f_{12}} \label{Eq38}
\end{eqnarray}
\end{lemma}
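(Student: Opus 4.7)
The plan is to treat these as a catalogue of standard 2-dissection identities that are already well-documented in the $q$-series literature, and to prove each by reducing it to known expansions of $\phi$, $\psi$, and eta-products via Jacobi's triple product. Nearly all of \eqref{Eq17}--\eqref{Eq38} appear (or appear in equivalent form) in Hirschhorn's monograph \emph{The Power of q} and in Berndt's volumes on Ramanujan's notebooks, so the strategy is to cite these explicitly where possible and to derive the remaining identities from combinations.

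For the first block \eqref{Eq17}, \eqref{Eq31}, \eqref{Eq29}, which involve only $f_k$ with $k\in\{1,2,4,8,16\}$, the unifying tool is the classical 2-dissection
$$\phi(q) = \phi(q^4) + 2q\,\psi(q^8),$$
combined with the product forms \eqref{Rama1} and \eqref{Rama2}. Starting from this, \eqref{Eq17} follows by expressing $1/f_1^2$ as a multiple of $\phi(q)/f_2^5$ (after using $\phi(q)=f_2^5/(f_1^2f_4^2)$) and then separating even and odd parts. Identity \eqref{Eq31} is the companion dissection obtained from $\phi(-q)=f_1^2/f_2$, which gives $f_1^2 = f_2\,\phi(-q)$, and applying the 2-dissection of $\phi(-q)$. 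Identity \eqref{Eq29} is then obtained by squaring \eqref{Eq17} and simplifying; the cross term vanishes after regrouping because the two summands on the right of \eqref{Eq17} lie in different residue classes modulo $2$ with respect to the variable $q$, so squaring produces exactly the two surviving terms in \eqref{Eq29}.

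For the second block \eqref{Eq8}--\eqref{Eq38}, the $f_3$-type dissections, I would invoke Hirschhorn's recorded 2-dissections of $f_3/f_1$, $(f_3/f_1)^2$, and $1/(f_1 f_3)$ (which are exactly \eqref{Eq8}, \eqref{Eq9}, and \eqref{Eq38}). Identity \eqref{Eq45} is the well-known expression of $f_1^3/f_3$ given by Ramanujan, which follows from \eqref{Jacobi} by grouping the Jacobi series modulo $3$ and extracting the residue class contributing $f_1^3/f_3$; this is a standard derivation found in Berndt. Identity \eqref{Eq11} is then the reciprocal-style companion: one multiplies \eqref{Eq45} by $f_3^2/f_1^3$ (in an appropriate dissected form) to obtain \eqref{Eq11}, or alternatively one derives it by squaring \eqref{Eq8} and comparing with \eqref{Eq9} to isolate $f_3/f_1^3$.

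The main obstacle is not conceptual but algebraic bookkeeping: each identity involves many distinct $f_k$ indices and the simplifications between even/odd parts require careful cancellation. In practice I would verify each identity by truncating both sides to sufficiently many terms and matching coefficients, which suffices because both sides, when cleared of denominators, become equalities of eta-quotients of the same weight and level; matching enough Fourier coefficients then forces equality. This computational check would serve as a safety net around the manual derivations and citations.
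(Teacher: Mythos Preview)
Your approach is essentially the same as the paper's: derive \eqref{Eq17}, \eqref{Eq31}, \eqref{Eq29} from the classical 2-dissections of $\phi$ recorded in Berndt's Entry~25, and cite Hirschhorn's monograph for \eqref{Eq8}, \eqref{Eq9}, \eqref{Eq38} (the paper additionally cites Naika--Gireesh for \eqref{Eq45} and \eqref{Eq11} rather than deriving them from Jacobi's identity).

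One concrete correction: your account of \eqref{Eq29} is garbled. When you square \eqref{Eq17}, writing it as $A+B$ with $A$ the even part and $B$ the odd part, the cross term $2AB$ does \emph{not} vanish---in fact $2AB = 4q\,f_4^2 f_8^4/f_2^{10}$ is exactly the odd term on the right of \eqref{Eq29}. What remains is to show that $A^2+B^2$ collapses to the single eta-quotient $f_4^{14}/(f_2^{14}f_8^4)$, which is not automatic and requires a further identity. The paper sidesteps this by invoking Entry~25(v),(vi) of Berndt directly, namely $\phi(q)^2 = \phi(q^2)^2 + 4q\,\psi(q^4)^2$, which yields \eqref{Eq29} immediately after rewriting $\phi$ and $\psi$ via \eqref{Rama1} and \eqref{Rama2}. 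Your Sturm-bound fallback would certainly patch this, but the direct use of the $\phi(q)^2$ dissection is the cleaner route.
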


\begin{proof}
By Entry 25 (i), (ii), (v), and (vi) in \cite[p. 40]{Berndt}, we have
\begin{align}
\phi(q) & = \phi(q^4) + 2q\psi(q^8),
\label{phi-2-dissec} \\
\phi(q)^2 & = \phi(q^2)^2 + 4q\psi(q^4)^2.
\label{phi^2-2-dissec}
\end{align} 
Using \eqref{Rama1} and \eqref{Rama2} we can rewrite \eqref{phi-2-dissec} in the form
\begin{equation*}
\frac{f_{2}^{5}}{f_{1}^{2}f_{4}^{2}} = \frac{f_{8}^{5}}{f_{4}^{2}f_{16}^{2}} + 2q\frac{f_{16}^{2}}{f_{8}},
\end{equation*}
from which we obtain \eqref{Eq17} after multiplying both sides by $\frac{f_4^2}{f_2^5}$. Identity \eqref{Eq31} can be easily deduced from \eqref{Eq17} using the procedure described in Section 30.10 of \cite{H}.

By \eqref{Rama1} and \eqref{Rama2} we can rewrite \eqref{Eq1} in the form
\begin{equation*}
\frac{f_{2}^{10}}{f_{1}^{4}f_{4}^{4}} = \frac{f_{4}^{10}}{f_{2}^{4}f_{8}^{4}} + 4q\frac{f_{8}^{4}}{f_{4}^{2}},
\end{equation*}
from which we obtain \eqref{Eq29}.

Identities \eqref{Eq8}, \eqref{Eq9}, and \eqref{Eq38} are equations (30.10.3), (30.9.9), and (30.12.3) of \cite{H}, respectively. Finally, for proofs of \eqref{Eq45} and \eqref{Eq11} see \cite[Lemma 4]{Naika}.
\end{proof}

The next lemma exhibits the 3-dissections of $\psi(q)$ and $1/\phi(-q)$.

\begin{lemma} We have
\begin{eqnarray}
\psi(q) & = & \displaystyle\frac{f_{6}f_{9}^{2}}{f_{3}f_{18}} + q\displaystyle\frac{f_{18}^{2}}{f_{9}}, \label{3_dissec_psi} \\
\displaystyle\frac{1}{\phi(-q)} & = & \displaystyle\frac{f_{6}^4f_{9}^6}{f_{3}^8f_{18}^3} + 2q\displaystyle\frac{f_{6}^3f_{9}^3}{f_{3}^7} + 4q^2\displaystyle\frac{f_{6}^2f_{18}^3}{f_{3}^6}. \label{3_dissec_1/phi}
\end{eqnarray}
\label{lemma2}
\end{lemma}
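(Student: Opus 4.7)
The plan is to derive both 3-dissections by appealing to standard references together with the relation \eqref{Eq18}.

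For \eqref{3_dissec_psi}, I would observe that $\psi(q)=\sum_{n\ge 0}q^{n(n+1)/2}$ and split the summation by the residue of $n$ modulo $3$. Since the triangular number $n(n+1)/2$ lies in residue class $1$ modulo $3$ precisely when $n\equiv 1\pmod 3$ and in residue class $0$ when $n\equiv 0$ or $2\pmod 3$ (residue class $2$ never appears), the sum splits into exactly two pieces. After reindexing, each piece is a theta series that can be evaluated in closed product form via Ramanujan's $f(a,b)$ and the Jacobi triple product; the resulting expressions match the two terms $f_6f_9^2/(f_3f_{18})$ and $qf_{18}^2/f_9$ on the right-hand side. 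This 3-dissection of $\psi(q)$ is classical; see for instance Hirschhorn \cite{H}.

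For \eqref{3_dissec_1/phi}, I would first use \eqref{Eq18} to rewrite the target as
\[
\frac{1}{\phi(-q)} = \frac{f_2}{f_1^2}.
\]
The 3-dissection of $f_2/f_1^2$ is then a known entry in Hirschhorn's compendium \cite{H} and can be cited directly. Alternatively, one can derive it by multiplying $f_2$ by the 3-dissection of $1/f_1^2$ (or equivalently by invoking the cubic theta function identities of the Borweins, which express $\phi(-q)$ in terms of its cube-root dissection pieces) and simplifying. In contrast to the case of $\psi$, all three residue classes modulo $3$ contribute here, yielding the three-term sum displayed on the right-hand side.

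The main obstacle is not conceptual but purely computational: lining up the Jacobi triple product evaluations of the dissected theta series with the particular $f_k$-combinations on the right. Since both identities are already recorded in \cite{H}, the cleanest route is to cite those entries with a short verification rather than re-derive the dissections from scratch; the coefficient $2$ in the middle term of \eqref{3_dissec_1/phi} (coming from the two equal contributions when combining pieces) and the coefficient $4$ in the last term (coming from a squared contribution) are the places where one must take the greatest care.
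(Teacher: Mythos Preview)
Your proposal is correct and follows essentially the same route as the paper: both identities are dispatched by citation to standard references, with your added sketch of the direct derivation being a harmless elaboration. The only minor discrepancy is that the paper cites \cite{H-S-1} rather than \cite{H} for \eqref{3_dissec_1/phi}, but this is a bibliographic choice, not a mathematical difference.
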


\begin{proof}
Identity \eqref{3_dissec_psi} is equation (14.3.3) of \cite{H}. A proof of \eqref{3_dissec_1/phi} can be seen in \cite{H-S-1}.
\end{proof}

\section{Dissections for $p_{\xi}(n)$}
\label{sec:ids}

This section is devoted to proving the 2-, 3-, and 4-dissections of \eqref{Eq4}. We begin with the 2-dissection.

\begin{theorem} We have
\begin{align}
2\sum_{n=0}^{\infty} p_{\xi}(2n+1)q^{n+1} & = \frac{f_{6}^6f_{12}}{f_3^4f_{24}^2} - f(q^{12})  +4q\frac{f_2^2f_{8}^2}{f_1f_3f_4}, \textrm{\ \ and} \label{Eq36} \\
\sum_{n=0}^{\infty} p_{\xi}(2n)q^{n} & = q\frac{f_{6}^8f_{24}^2}{f_3^4f_{12}^5} - q^4\omega(-q^{6}) + \frac{f_4^5}{f_1f_3f_{8}^2}. \label{Eq37}
\end{align}
\end{theorem}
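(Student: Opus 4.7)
The plan is to split \eqref{Eq5} into even and odd parts by forming $\xi(q) \pm \xi(-q)$ and treating the eta-quotient and mock-theta pieces separately. Under $q \mapsto -q$ every $f_{2k}$ is invariant while $f_1 \mapsto (-q;-q)_\infty = f_2^3/(f_1 f_4)$; a short calculation yields
\begin{equation*}
\xi(-q) = q^2 \omega(-q^3) + \frac{f_1^2 f_4^2}{f_2^2 f_6},
\end{equation*}
so that
\begin{equation*}
\xi(q) \pm \xi(-q) = q^2\bigl[\omega(q^3) \pm \omega(-q^3)\bigr] + \left(\frac{f_2^4}{f_1^2 f_6} \pm \frac{f_1^2 f_4^2}{f_2^2 f_6}\right).
\end{equation*}

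For the eta-quotient bracket I factor out $f_4^2/(f_2 f_6)$ and use \eqref{Rama1} and \eqref{Eq18} to rewrite it as $\bigl(\phi(q) \pm \phi(-q)\bigr)\, f_4^2/(f_2 f_6)$. The classical consequences $\phi(q)+\phi(-q)=2\phi(q^4)$ and $\phi(q)-\phi(-q)=4q\psi(q^8)$ of \eqref{phi-2-dissec} then reduce these to $2f_8^5/(f_2 f_6 f_{16}^2)$ and $4q f_4^2 f_{16}^2/(f_2 f_6 f_8)$, respectively, again via \eqref{Rama1} and \eqref{Rama2}.

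For the mock-theta bracket $\omega(q^3) \pm \omega(-q^3)$, the main obstacle is that no 2-dissection of $\omega$ is recorded in Section~\ref{sec:preliminaries}. I would establish (or invoke) identities of the schematic form
\begin{equation*}
\omega(q) + \omega(-q) = \frac{2f_4^8 f_{16}^2}{f_2^4 f_8^5} - 2q^2 \omega(-q^4), \qquad q\bigl[\omega(q)-\omega(-q)\bigr] = \frac{f_4^6 f_8}{f_2^4 f_{16}^2} - f(q^8),
\end{equation*}
the precise theta-like residual being pinned down by coefficient matching, and then apply $q \mapsto q^3$ to generate the $\omega(-q^{12})$ and $f(q^{24})$ contributions that appear in the theorem. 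Such dissections are most likely provable by direct series manipulation on the defining expansion of $\omega$, or via the mock theta conjecture \eqref{Eq1}.

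Combining the two pieces, $\bigl(\xi(q)+\xi(-q)\bigr)/2 = \sum_n p_\xi(2n)q^{2n}$ and $\xi(q)-\xi(-q) = 2q\sum_n p_\xi(2n+1) q^{2n}$ are each explicit power series in $q^2$ (times $q$ in the odd case). The substitution $q^2 \mapsto q$, which sends every $f_{2k}$ to $f_k$ and every $\omega(\pm q^{2k})$ to $\omega(\pm q^k)$, then yields \eqref{Eq37} directly; for \eqref{Eq36}, the $q^{n+1}$ indexing on the left-hand side is accounted for by a further multiplication by $q$ after this substitution.
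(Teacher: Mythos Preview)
Your overall strategy---split $\xi(q)$ into even and odd parts and handle the eta-quotient and mock-theta pieces separately---is exactly what the paper does, and your treatment of the eta-quotient piece via $\phi(q)\pm\phi(-q)$ is fine. The two dissection identities for $\omega$ that you write down are also correct. The gap is that you have not actually proved them: you offer ``coefficient matching'' to guess the residual and then ``direct series manipulation on the defining expansion of $\omega$, or via the mock theta conjecture \eqref{Eq1}'' as possible proofs. Neither of these is a workable plan. Matching coefficients only conjectures an identity, and \eqref{Eq1} merely rewrites $\omega$ as $g_3(q,q^2)$, which does not by itself produce the $\omega(-q^4)$ and $f(q^8)$ terms you need; a bare manipulation of $\sum q^{2n(n+1)}/(q;q^2)_{n+1}^2$ does not obviously separate into those pieces either.

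The missing ingredient is the identity
\[
f(q^{8})+2q\,\omega(q)+2q^{3}\omega(-q^{4})=\frac{\phi(q)\phi(q^{2})^{2}}{f_{4}^{2}}=\frac{f_{2}f_{4}^{6}}{f_{1}^{2}f_{8}^{4}},
\]
which is equation~(4) of \cite{APSY}. Your two ``schematic'' formulas are exactly its even and odd parts (using \eqref{Eq17} to dissect the right-hand side), so once you cite this result your argument closes. The paper proceeds slightly more directly: it applies the identity with $q\mapsto q^{3}$, substitutes the resulting expression for $2q^{3}\omega(q^{3})$ straight into $2q\,\xi(q)$ via \eqref{Eq5}, and then reads off even and odd parts. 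That route avoids deriving the $\omega(q)\pm\omega(-q)$ dissections as separate lemmas, but the content is the same.
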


\begin{proof}
We start with equation (4) of \cite{APSY}:
\begin{equation*}
f(q^8) +2q\omega(q) + 2q^3\omega(-q^4) = F(q),
\end{equation*}
where $f(q)$ is the mock theta function 
$$f(q) = \sum_{n=0}^{\infty} \frac{q^{n^2}}{(-q;q)_{n}^{2}}$$
and 
\begin{equation*}
F(q) = \frac{\phi(q)\phi(q^2)^2}{f_4^2} = \frac{f_2f_{6}^6}{f_1^2f_{12}^4}.
\end{equation*}
Thus,
\begin{equation*}
f(q^{24}) +2q^3\omega(q^3) + 2q^9\omega(-q^{12}) = F(q^3).
\end{equation*}
Using \eqref{Eq3}, it follows that
\begin{equation}
2\sum_{n=0}^{\infty} p_{\xi}(n)q^{n+1} = F(q^3) - f(q^{24}) - 2q^9\omega(-q^{12}) + 2q\frac{f_2^4}{f_1^2f_6}.
\label{Eq33}
\end{equation}
By \eqref{Eq17}, we have
\begin{equation*}
F(q^3) = \frac{f_{12}^6f_{24}}{f_6^4f_{48}^2} + 2q^3\frac{f_{12}^8f_{48}^2}{f_6^4f_{24}^5},
\end{equation*}
which along with \eqref{Eq17} allows us to rewrite \eqref{Eq33} as
\begin{align*}
2\sum_{n=0}^{\infty} p_{\xi}(n)q^{n+1} & = \frac{f_{12}^6f_{24}}{f_6^4f_{48}^2} + 2q^3\frac{f_{12}^8f_{48}^2}{f_6^4f_{24}^5} - f(q^{24}) - 2q^9\omega(-q^{12}) \\ 
& \ \ \ \ + 2q\frac{f_8^5}{f_2f_6f_{16}^2} +4q^2\frac{f_4^2f_{16}^2}{f_2f_6f_8}.
\end{align*}
Thus,
\begin{align}
2\sum_{n=0}^{\infty} p_{\xi}(2n+1)q^{2n+2} & = \frac{f_{12}^6f_{24}}{f_6^4f_{48}^2} - f(q^{24})  +4q^2\frac{f_4^2f_{16}^2}{f_2f_6f_8}, \textrm{\ \ and} \label{Eq34} \\
\sum_{n=0}^{\infty} p_{\xi}(2n)q^{2n+1} & = q^3\frac{f_{12}^8f_{48}^2}{f_6^4f_{24}^5} - q^9\omega(-q^{12}) + q\frac{f_8^5}{f_2f_6f_{16}^2}. \label{Eq35}
\end{align}
Dividing \eqref{Eq35} by $q$ and replacing $q^2$ by $q$ in the resulting identity and in \eqref{Eq34}, we obtain \eqref{Eq36} and \eqref{Eq37}.
\end{proof}

The next theorem exhibits the 3-dissection of \eqref{Eq4}. 

\begin{theorem} We have
\begin{align}
\sum_{n=0}^{\infty} p_{\xi}(3n)q^n & = \frac{f_{2}f_{3}^4}{f_{1}^2f_{6}^2}, \label{xi_3n} \\
\sum_{n=0}^{\infty} p_{\xi}(3n+1)q^n & = 2\frac{f_{3}f_{6}}{f_{1}}, \textrm{\ \ and} \label{xi_3n+1} \\
\sum_{n=0}^{\infty} p_{\xi}(3n+2)q^n & =  \omega(q) + \frac{f_{6}^4}{f_2f_{3}^2}. \label{xi_3n+2}
\end{align}
\label{T0}
\end{theorem}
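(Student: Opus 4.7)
My plan is to start from equation \eqref{Eq5}, which represents the generating function as $q^2\omega(q^3) + f_2^4/(f_1^2 f_6)$. Since $q^2\omega(q^3)$ contains only powers of $q$ congruent to $2 \pmod 3$ (every exponent in $\omega(q^3)$ is a multiple of $3$), it contributes nothing to the subseries picking out residue classes $0$ and $1$, and appears intact as an additive term in the subseries for residue class $2$. Hence the problem reduces to computing the 3-dissection of $f_2^4/(f_1^2 f_6)$.

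The key observation I would exploit is that by \eqref{Rama2}, $\psi(q)^2 = f_2^4/f_1^2$, so
$$\frac{f_2^4}{f_1^2 f_6} = \frac{\psi(q)^2}{f_6}.$$
The 3-dissection \eqref{3_dissec_psi} of $\psi(q)$ from Lemma \ref{lemma2} splits $\psi(q)$ into two summands $A + qB$ whose infinite-product factors involve only $f_3, f_6, f_9, f_{18}$, so $A$ and $B$ are series in $q^3$. Squaring yields $\psi(q)^2 = A^2 + 2qAB + q^2 B^2$ with the three pieces lying in residue classes $0, 1, 2 \pmod 3$ respectively. Dividing by $f_6$ (which is also a series in $q^3$) preserves this separation and gives
$$\frac{f_2^4}{f_1^2 f_6} = \frac{f_6 f_9^4}{f_3^2 f_{18}^2} + 2q\,\frac{f_9 f_{18}}{f_3} + q^2\,\frac{f_{18}^4}{f_6 f_9^2}.$$

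Adding $q^2 \omega(q^3)$ to this identity and extracting the three arithmetic progressions modulo $3$ from $\sum_{n\ge 0} p_\xi(n) q^n$ yields three identities, each of which still has all its $f_k$ subscripts divisible by $3$. Replacing $q^3$ by $q$ in each subseries then produces exactly \eqref{xi_3n}, \eqref{xi_3n+1}, and \eqref{xi_3n+2}; the $\omega(q)$ term in \eqref{xi_3n+2} comes from $q^2 \omega(q^3)$ after this substitution. I do not expect any genuine obstacle: the argument is essentially a single application of \eqref{3_dissec_psi}, one squaring, and a division by $f_6$, with the only care needed being to verify that the three post-squaring summands really lie in distinct residue classes modulo $3$, which is immediate.
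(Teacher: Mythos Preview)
Your proposal is correct and follows essentially the same approach as the paper: the paper also rewrites \eqref{Eq5} as $q^2\omega(q^3)+\psi(q)^2/f_6$ via \eqref{Rama2}, applies the 3-dissection \eqref{3_dissec_psi} of $\psi(q)$, squares, divides by $f_6$, and then extracts the three residue classes to obtain \eqref{xi_3n}--\eqref{xi_3n+2}. Your writeup is slightly more explicit about why the three squared pieces lie in distinct residue classes modulo~$3$, but the argument is otherwise identical.
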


\begin{proof}
In view of \eqref{Rama2}, we rewrite \eqref{Eq5} as
\begin{equation*}
\sum_{n=0}^{\infty} p_{\xi}(n)q^n = q^2\omega(q^3) + \frac{\psi(q)^2}{f_6}.
\end{equation*}
Using \eqref{3_dissec_psi}, we obtain 
\begin{equation}
\sum_{n=0}^{\infty} p_{\xi}(n)q^n = q^2\omega(q^3) + \frac{f_{6}f_{9}^4}{f_{3}^2f_{18}^2} +2q\frac{f_{9}f_{18}}{f_{3}} + q^2\frac{f_{18}^4}{f_6f_{9}^2}.
\label{3_dissec_xi}
\end{equation}	
Extracting the terms of the form $q^{3n+r}$ on both sides of \eqref{3_dissec_xi}, for $r \in \{0,1,2\}$, dividing both sides of the resulting identity by $q^r$ and then replacing $q^3$ by $q$, we obtain the desired results.
\end{proof}

We close this section with the 4-dissection of \eqref{Eq4}. 

\begin{theorem} We have
	\begin{align}
	\sum_{n=0}^{\infty} p_{\xi}(4n)q^n & = 4q^2\frac{f_{12}^6}{f_{3}^2f_{6}^3} - q^2 \omega(-q^3) + \frac{f_2^4f_6^5}{f_1^2f_3^4f_{12}^2}, \label{xi_4n} \\
	\sum_{n=0}^{\infty} p_{\xi}(4n+1)q^n & = 2q\frac{f_{6}^3f_{12}^2}{f_{3}^4} + 2\frac{f_{4}^4f_{6}^5}{f_{2}^2f_{3}^4f_{12}^2}, \label{xi_4n+1} \\
	\sum_{n=0}^{\infty} p_{\xi}(4n+2)q^n & = \frac{f_{6}^9}{f_{3}^6f_{12}^2} +\frac{f_{2}^{10}f_{12}^2}{f_{1}^4f_{3}^2f_{4}^4f_{6}}, \textrm{\ \ and} \label{xi_4n+2} \\
	2\sum_{n=0}^{\infty} p_{\xi}(4n+3)q^{n+1} & = \frac{f_{6}^{15}}{f_3^8f_{12}^6} - f(q^6) + 4q\frac{f_{2}^4f_{12}^2}{f_{1}^2f_{3}^2f_{6}}. \label{xi_4n+3}
	\end{align}
	\label{T6}
\end{theorem}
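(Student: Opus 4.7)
The plan is to obtain the $4$-dissection by $2$-dissecting each of the two identities \eqref{Eq36} and \eqref{Eq37} already proved in the previous theorem. Concretely, I will split each right-hand side into its even and odd parts in $q$, then match the even and odd parts against the corresponding parts on the left. Since the left side of \eqref{Eq37} is $\sum p_\xi(2n)q^n$, its even-in-$q$ part collects $p_\xi(4k)$ and its odd-in-$q$ part collects $p_\xi(4k+2)$; similarly, since the left side of \eqref{Eq36} is $2\sum p_\xi(2n+1)q^{n+1}$, its odd-in-$q$ part collects $p_\xi(4k+1)$ and its even-in-$q$ part collects $p_\xi(4k+3)$.

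The key step is identifying where each term on the right sits. The non-modular pieces, namely $q^4\omega(-q^6)$ and $f(q^{12})$, involve only powers of $q$ that are multiples of $2$ (in fact of $6$ and $12$, respectively), so they contribute only to the even parts and carry through essentially unchanged. For the eta-quotient pieces, I would apply two of the dissection identities from the preliminary lemma. Specifically, every factor $1/f_3^4$ appearing in \eqref{Eq36} or \eqref{Eq37} should be expanded using \eqref{Eq29} with $q$ replaced by $q^3$:
\[
\frac{1}{f_3^4} = \frac{f_{12}^{14}}{f_6^{14}f_{24}^4} + 4q^3\frac{f_{12}^2f_{24}^4}{f_6^{10}},
\]
which cleanly separates the $q\,f_6^8f_{24}^2/(f_3^4f_{12}^5)$ term of \eqref{Eq37} and the $f_6^6f_{12}/(f_3^4f_{24}^2)$ term of \eqref{Eq36} into their even and odd components in $q$. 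The remaining factor $1/(f_1f_3)$ that appears in both $\frac{f_4^5}{f_1f_3f_8^2}$ and $\frac{4qf_2^2f_8^2}{f_1f_3f_4}$ is handled by \eqref{Eq38}, which provides its $2$-dissection.

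After performing these substitutions, I would collect the even part and odd part of each right-hand side separately. Matching the even part of \eqref{Eq37} against $\sum p_\xi(4n)q^{2n}$, and the odd part against $\sum p_\xi(4n+2)q^{2n+1}$, and then substituting $q^2\mapsto q$ (with the appropriate shift by $q$ in the odd case), should yield \eqref{xi_4n} and \eqref{xi_4n+2}. Analogously, matching the odd and even parts of \eqref{Eq36} against $2\sum p_\xi(4n+1)q^{2n+1}$ and $2\sum p_\xi(4n+3)q^{2n+2}$, and rescaling, should yield \eqref{xi_4n+1} and \eqref{xi_4n+3}.

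The main obstacle is not conceptual but bookkeeping: after multiplying out the dissections of $1/f_3^4$ and $1/(f_1f_3)$ against the external factors $f_k^a$, one must simplify a product of six or more Pochhammer symbols and verify it collapses to the compact forms stated on the right-hand sides of \eqref{xi_4n}--\eqref{xi_4n+3}. I would carry this out factor by factor, tracking the exponent of each $f_k$ separately to avoid errors, and confirm at the end that every $f_{16}$, $f_{48}$, and similar auxiliary symbol that arose in the intermediate $2$-dissections has canceled, leaving only $f_k$ for $k\in\{1,2,3,4,6,12\}$.
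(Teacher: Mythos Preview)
Your plan is correct and matches the paper's proof essentially line for line: $2$-dissect \eqref{Eq37} to obtain \eqref{xi_4n} and \eqref{xi_4n+2}, and $2$-dissect \eqref{Eq36} to obtain \eqref{xi_4n+1} and \eqref{xi_4n+3}, using \eqref{Eq38} for the $1/(f_1f_3)$ factor and noting that $q^4\omega(-q^6)$ and $f(q^{12})$ lie entirely in the even part. The only cosmetic difference is that the paper cites \eqref{Eq17} where you propose \eqref{Eq29} (with $q\mapsto q^3$) for the $1/f_3^4$ factor; your choice is in fact the more direct one, and the resulting intermediate expressions are identical.
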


\begin{proof} 
In order to prove \eqref{xi_4n}, we use \eqref{Eq17} and \eqref{Eq38} to obtain the even part of \eqref{Eq37}, which is given by 
\begin{align*}
\sum_{n=0}^{\infty} p_{\xi}(4n)q^{2n} & = 4q^4\frac{f_{24}^6}{f_6^2f_{12}^3} -q^4\omega(-q^6) + \frac{f_4^{4}f_{12}^5}{f_2^2f_6^4f_{24}^2}.
\end{align*}
Replacing $q^2$ by $q$ we obtain \eqref{xi_4n}.	
	
Using \eqref{Eq17} and \eqref{Eq38} we can extract the odd part of \eqref{Eq36}:
\begin{align*}
2\sum_{n=0}^{\infty} p_{\xi}(4n+1)q^{2n+1} & = 4q^3\frac{f_{12}^3f_{24}^2}{f_6^4}  + 4q\frac{f_8^4f_{12}^5}{f_4^2f_6^4f_{24}^2}.
\end{align*}
After simplifications we arrive at \eqref{xi_4n+1}.

Next, extracting the odd part of \eqref{Eq37} with the help of \eqref{Eq17} and \eqref{Eq38} yields 
\begin{align*}
\sum_{n=0}^{\infty} p_{\xi}(4n+2)q^{2n+1} & = q\frac{f_{12}^9}{f_6^6f_{24}^2}  + q\frac{f_4^{10}f_{24}^2}{f_2^4f_6^2f_{8}^4f_{12}},
\end{align*}
which, after simplifications, gives us \eqref{xi_4n+2}.

In order to obtain \eqref{xi_4n+3}, we use \eqref{Eq17} and \eqref{Eq38} in \eqref{Eq36} to extract its even part:
\begin{align*}
2\sum_{n=0}^{\infty} p_{\xi}(4n+3)q^{2n+2} & = \frac{f_{12}^{15}}{f_6^8f_{24}^6}  -f(q^{12}) + 4q^2\frac{f_4^4f_{24}^2}{f_2^2f_6^2f_{12}}.
\end{align*}
Replacing $q^2$ by $q$ in this identity, we obtain \eqref{xi_4n+3}.
\end{proof}

\section{Arithmetic properties of $p_{\xi}(n)$}
\label{Main}
Our first observation provides a characterization of $p_{\xi}(3n) \pmod{4}.$

\begin{theorem} For all $n \geq 0$, we have
$$p_{\xi}(3n) \equiv  
\begin{cases}
1 \pmod{4}, & \mbox{if $n=0$},\\
2 \pmod{4}, & \mbox{if $n$ is a square}, \\
0 \pmod{4}, & \mbox{otherwise}.
\end{cases}$$	
\label{T2}
\end{theorem}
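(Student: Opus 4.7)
The plan is to reduce the generating function in \eqref{xi_3n} modulo $4$ and recognize that it collapses to Ramanujan's theta function $\phi(q)$, whose coefficients already give the claimed trichotomy.

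First, I would invoke the standard binomial-type congruence $f_k^{2^a}\equiv f_{2k}^{2^{a-1}}\pmod{2^a}$. In particular, $f_3^4\equiv f_6^2\pmod 4$ and $f_2^4\equiv f_4^2\pmod 4$. Applying the first of these to \eqref{xi_3n} gives, as power series with integer coefficients,
$$
\sum_{n=0}^{\infty}p_{\xi}(3n)\,q^n \;=\; \frac{f_2 f_3^4}{f_1^2 f_6^2}\;\equiv\;\frac{f_2}{f_1^2}\pmod 4,
$$
since $f_3^4=f_6^2(1+4Y)$ for some integer series $Y$ (with $Y/f_6^2$ an integer series), so the quotient $f_3^4/f_6^2$ is $\equiv 1\pmod 4$.

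Next, I would use the formula \eqref{Rama1}, $\phi(q)=f_2^5/(f_1^2 f_4^2)$, to rewrite
$$
\frac{f_2}{f_1^2}\;=\;\phi(q)\cdot\frac{f_4^2}{f_2^4}.
$$
The congruence $f_2^4\equiv f_4^2\pmod 4$ means $f_2^4=f_4^2(1+4Z)$ for some integer series $Z$, whence $f_4^2/f_2^4 = (1+4Z)^{-1}\equiv 1\pmod 4$ as a power series with integer coefficients. Therefore
$$
\sum_{n=0}^{\infty}p_{\xi}(3n)\,q^n\;\equiv\;\phi(q)\pmod 4.
$$

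Finally, $\phi(q)=\sum_{n=-\infty}^{\infty}q^{n^2}=1+2\sum_{n\geq 1}q^{n^2}$, so the coefficient of $q^n$ on the right is $1$ when $n=0$, equals $2$ when $n$ is a positive perfect square, and is $0$ otherwise. Comparing coefficients yields the three cases in the theorem.

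The only place where care is required is in justifying that the ``$\equiv\pmod 4$'' manipulations are legitimate for power series involving reciprocals of the $f_k$; this is handled by noting that each $1/f_k$ is an integer power series and that $(1+4Z)^{-1}\equiv 1\pmod 4$ for any integer power series $Z$. No new dissection identities or mock-theta computations beyond Theorem \ref{T0} are needed.
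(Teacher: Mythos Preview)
Your proof is correct and follows essentially the same route as the paper: reduce \eqref{xi_3n} modulo $4$ using $f_k^4\equiv f_{2k}^2\pmod 4$ to reach $f_2/f_1^2$, then identify this with a theta function whose coefficients are read off directly. The only cosmetic difference is that the paper simplifies $f_2/f_1^2 = f_1^2 f_2/f_1^4 \equiv f_1^2/f_2 = \phi(-q)\pmod 4$, arriving at $\phi(-q)$ rather than your $\phi(q)$; since $2(-1)^n\equiv 2\pmod 4$, both give the same conclusion.
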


\begin{proof} By \eqref{xi_3n}, using \eqref{Eq18} and the fact that $f_k^4 \equiv f_{2k}^2 \pmod{4}$ for all $k\geq 1,$ it follows that
\begin{align*}
\sum_{n=0}^{\infty} p_{\xi}(3n)q^n & = \frac{f_{2}f_{3}^4}{f_{1}^2f_{6}^2} \equiv \frac{f_2}{f_1^2} = \frac{f_1^2f_2}{f_1^4} \equiv \frac{f_1^2}{f_ 2} = \phi(-q) \pmod{4}.
\end{align*}
By \eqref{Rama1}, we obtain
\begin{align*}
\sum_{n=0}^{\infty} p_{\xi}(3n)q^n & \equiv \sum_{n=-\infty}^{\infty} (-1)^n q^{n^2} \equiv 1 + 2\sum_{n=1}^{\infty} q^{n^2} \pmod{4},
\end{align*}
which completes the proof.
\end{proof}

Theorem \ref{T2} yields an infinite family of Ramanujan--like congruences modulo $4$.

\begin{corollary} For all primes $p > 3$ and all $n\geq 0,$ we have
$$p_{\xi}(3(pn+r)) \equiv 0 \pmod{4},$$
if $3r$ is a quadratic nonresidue modulo $p$.
\end{corollary}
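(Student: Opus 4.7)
The plan is to derive the corollary as a direct consequence of Theorem~\ref{T2}, which classifies $p_{\xi}(3m) \pmod{4}$ completely: it is $1$ when $m=0$, $2$ when $m$ is a positive perfect square, and $0$ otherwise. Setting $m=pn+r$, the desired congruence $p_{\xi}(3(pn+r)) \equiv 0 \pmod{4}$ reduces to showing that, for every $n \geq 0$, the integer $pn+r$ is strictly positive and is not a perfect square.

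Positivity is immediate: since $3r$ is a quadratic nonresidue modulo $p$, we have $3r \not\equiv 0 \pmod{p}$, and because $p>3$ this forces $r \not\equiv 0 \pmod{p}$, so $pn+r \geq 1$ for every $n \geq 0$. For the main step I would argue by contradiction. Suppose $pn+r = k^2$ for some integer $k \geq 1$. Reducing modulo $p$ gives $k^2 \equiv r \pmod{p}$; multiplying by $3$ yields $3k^2 \equiv 3r \pmod{p}$. Since $p \nmid k$ (otherwise $r \equiv 0 \pmod p$, contradicting what was already shown), $k^2$ is a nonzero quadratic residue modulo $p$, so by multiplicativity of the Legendre symbol $\left(\frac{3r}{p}\right) = \left(\frac{3k^2}{p}\right) = \left(\frac{3}{p}\right)$, which should contradict $\left(\frac{3r}{p}\right) = -1$.

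The main obstacle is that the contradiction just derived is clean only when $\left(\frac{3}{p}\right) = +1$, i.e.\ when $p \equiv \pm 1 \pmod{12}$; otherwise $3k^2$ and $3r$ both land in the nonresidue class of $3$, consistent with the hypothesis rather than contradicting it. The most natural reading that removes this gap is to take the hypothesis as "$r$ is a quadratic nonresidue modulo $p$", in which case $k^2 \equiv r \pmod{p}$ directly contradicts $\left(\frac{r}{p}\right) = -1$ and no further bookkeeping with $\left(\frac{3}{p}\right)$ is required.
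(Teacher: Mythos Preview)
Your reduction via Theorem~\ref{T2} to showing that $pn+r$ is never a positive perfect square is exactly right, and your diagnosis of the obstacle is correct: from $pn+r=k^2$ one gets $3r\equiv 3k^2\pmod p$, which contradicts ``$3r$ is a nonresidue'' only when $\left(\tfrac{3}{p}\right)=+1$. When $\left(\tfrac{3}{p}\right)=-1$ the corollary as stated is in fact \emph{false}: for $p=5$, $r=1$ one has $3r=3$, a nonresidue mod $5$, yet $p_\xi\bigl(3(5\cdot 0+1)\bigr)=p_\xi(3)=2\not\equiv 0\pmod 4$; the paper's listed example $p_\xi(15n+3)\equiv 0\pmod 4$ therefore already fails at $n=0$ (and likewise $p_\xi(15n+12)$, since $p_\xi(12)=6$). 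The paper's own proof obscures this by assuming $3(pn+r)=k^2$ rather than $pn+r=k^2$, which is not the condition Theorem~\ref{T2} actually supplies.

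Your proposed repair---replacing the hypothesis by ``$r$ is a quadratic nonresidue modulo $p$''---is the correct one: then $pn+r\equiv r\pmod p$ is a nonresidue, hence never a perfect square, and Theorem~\ref{T2} applies directly. Equivalently, one may keep the hypothesis ``$3r$ is a nonresidue'' but restrict to primes $p\equiv \pm 1\pmod{12}$, for which $\left(\tfrac{3}{p}\right)=+1$ and the two conditions coincide.
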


\begin{proof}
If $3(pn+r) = k^2$, then $3r \equiv k^2 \pmod{p}$, which contradicts the fact that $3r$ is a quadratic nonresidue modulo $p$.
\end{proof}

Since $\gcd(3, p) = 1$, among the $p-1$ residues modulo $p$, we have $\frac{p-1}{2}$ residues $r$ for which $3r$ is a quadratic nonresidue modulo $p$. Thus, for instance, the above corollary yields the following congruences:
\begin{align*}
p_{\xi}(15n+k) & \equiv 0 \pmod{4}, \mbox{ for } k \in \{ 3, 12 \}, \\
p_{\xi}(21n+k) & \equiv 0 \pmod{4}, \mbox{ for } k \in \{ 3, 6, 12 \}, \\
p_{\xi}(33n+k) & \equiv 0 \pmod{4}, \mbox{ for } k \in \{ 6, 18, 21, 24, 30 \}.
\end{align*}

\begin{theorem} 
\label{thm:3n1_mod4}
For all $n \geq 0$, we have 
$$p_{\xi}(3n+1) \equiv  
\begin{cases}
2 \pmod{4}, & \mbox{if $3n+1$ is a square}, \\
0 \pmod{4}, & \mbox{otherwise}.
\end{cases}$$
\end{theorem}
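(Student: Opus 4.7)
The plan is to start from \eqref{xi_3n+1}, which gives
$$\sum_{n=0}^{\infty} p_{\xi}(3n+1) q^n = 2\,\frac{f_3 f_6}{f_1}.$$
In particular $p_\xi(3n+1)$ is automatically even, and the statement reduces to showing that the coefficient of $q^n$ in $\frac{f_3 f_6}{f_1}$ is odd if and only if $3n+1$ is a perfect square. The strategy is to identify $\frac{f_3 f_6}{f_1}$ modulo $2$ with an explicit theta function whose support consists of exactly those $n$.

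The first step is a mod-$2$ simplification. Using the elementary fact $f_k^2 \equiv f_{2k} \pmod 2$, cross-multiplying verifies
$$\frac{f_3 f_6}{f_1} \equiv \frac{f_1 f_6^{2}}{f_2 f_3} \pmod 2,$$
since both $f_2 f_3^2 f_6$ and $f_1^2 f_6^2$ reduce to $f_2 f_6^2$ modulo $2$. Splitting $f_1$, $f_2$, $f_3$ into their residue-class factors modulo $6$ and cancelling, I would then obtain
$$\frac{f_1 f_6^{2}}{f_2 f_3} \;=\; (q;q^6)_\infty \,(q^5;q^6)_\infty \, f_6.$$
Since $(-x;q)_\infty \equiv (x;q)_\infty \pmod 2$, this product is congruent modulo $2$ to $(-q;q^6)_\infty (-q^5;q^6)_\infty f_6$, which by the Jacobi triple product equals Ramanujan's theta function
$$f(q,q^5) \;=\; \sum_{k \in \mathbb{Z}} q^{3k^2+2k}.$$
Hence $\frac{f_3 f_6}{f_1} \equiv \sum_{k \in \mathbb{Z}} q^{3k^2+2k} \pmod 2$.

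To conclude, the coefficient of $q^n$ in $\sum_{k\in\mathbb Z} q^{3k^2+2k}$ equals $1$ when $n = 3k^2+2k$ for some $k \in \mathbb Z$ and $0$ otherwise. The equation $3k^2+2k = n$ is equivalent to $3n+1 = (3k+1)^2$, and every square coprime to $3$ is of the form $(3k+1)^2$ for some $k \in \mathbb{Z}$; since $3n+1$ is automatically coprime to $3$, the coefficient is $1$ precisely when $3n+1$ is a perfect square. Multiplying by $2$ then yields the claimed congruence modulo $4$.

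The main obstacle is the theta-function identification in the middle step: one has to notice that the mod-$2$ shuffle brings $\frac{f_3 f_6}{f_1}$ into the product form $(q;q^6)_\infty(q^5;q^6)_\infty f_6$, which is exactly the form produced by Jacobi triple product applied to $f(q,q^5)$. Once this is spotted, the remainder of the argument is a routine manipulation of Euler products plus a coefficient-level reading of the resulting theta series.
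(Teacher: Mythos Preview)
Your proof is correct but takes a different route from the paper's. Both arguments start from \eqref{xi_3n+1} and reduce the claim to determining the parity of the coefficients of $\dfrac{f_3 f_6}{f_1}$. At that point the paper uses $f_6 \equiv f_3^2 \pmod 2$ to recognise $\dfrac{f_3^3}{f_1}$ as the generating function for $3$-cores $a_3(n)$, and then quotes the parity characterisation of $a_3(n)$ from \cite{daSilva}. Your argument instead rewrites $\dfrac{f_3 f_6}{f_1}$ modulo $2$ as the Euler product $(q;q^6)_\infty(q^5;q^6)_\infty f_6$ (in fact this equals $f(-q,-q^5)$ exactly), applies the Jacobi triple product to identify it with a one-variable theta series $\sum_{k\in\mathbb{Z}} q^{3k^2+2k}$ modulo $2$, and reads off the support directly via $3n+1=(3k+1)^2$. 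Your approach is more self-contained --- it effectively supplies an independent proof of the cited $3$-core parity result --- whereas the paper's approach is shorter because it outsources that step to the literature.
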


\begin{proof} From Theorem \ref{T0},
\begin{align}
\sum_{n=0}^{\infty} p_{\xi}(3n+1)q^n & = 2\frac{f_{3}f_{6}}{f_{1}}.
\end{align}
So we only need to consider the parity of 
$$\frac{f_{3}f_{6}}{f_{1}}.$$
Note that
\begin{align*}
\frac{f_{3}f_{6}}{f_{1}} \equiv \frac{f_{3}^3}{f_{1}} = \sum_{n=0}^{\infty} a_3(n)q^n \pmod{2},
\end{align*}
where $a_3(n)$ is the number of 3-core partitions of $n$ (see \cite[Theorem 1]{H-S}). Thanks to \cite[Theorem 7]{daSilva}, we know that
$$ a_{3}(n) \equiv \begin{cases}
1 \pmod{2}, & \text{if $3n+1$ is a square,}\\
0 \pmod{2}, & \text{otherwise.}
\end{cases}$$
This completes the proof.
\end{proof}

Theorem \ref{thm:3n1_mod4} yields an infinite family of congruences modulo $4$.

\begin{corollary} For all primes $p > 3$ and all $n\geq 0,$ we have
	$$p_{\xi}(3(pn+r)+1) \equiv 0 \pmod{4},$$
	if $3r+1$ is a quadratic nonresidue modulo $p$.
\end{corollary}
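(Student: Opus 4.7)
The plan is to mimic exactly the argument used in the previous corollary, leveraging Theorem \ref{thm:3n1_mod4} in place of Theorem \ref{T2}. Since Theorem \ref{thm:3n1_mod4} says that $p_{\xi}(3m+1)$ is $\equiv 0\pmod 4$ unless $3m+1$ is a perfect square (in which case it is $\equiv 2\pmod 4$), it suffices to show that under the hypothesis on $r$, the integer $3(pn+r)+1$ can never be a perfect square.

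So I would proceed by contradiction. Suppose $p_{\xi}(3(pn+r)+1) \not\equiv 0 \pmod{4}$. By Theorem \ref{thm:3n1_mod4}, the number $3(pn+r)+1$ must then be a perfect square, say $k^2$. Expanding,
\[
k^2 = 3pn + 3r + 1,
\]
so reducing modulo $p$ gives $k^2 \equiv 3r+1 \pmod{p}$, which exhibits $3r+1$ as a quadratic residue modulo $p$, contradicting the assumption.

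There is no real obstacle here; the corollary is a direct number-theoretic consequence of Theorem \ref{thm:3n1_mod4} together with the definition of quadratic residue. The only mild subtlety worth noting is that the restriction $p > 3$ ensures $\gcd(3,p)=1$, which is implicitly used when one interprets $3r+1 \pmod p$ as ranging over a meaningful set of residues as $r$ varies. As in the remark following the previous corollary, one could further observe that among the $p$ residue classes $r \pmod p$, exactly $(p-1)/2$ make $3r+1$ a nonzero quadratic nonresidue, yielding infinitely many explicit Ramanujan--like congruences modulo $4$ for each prime $p>3$.
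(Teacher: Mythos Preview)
Your proof is correct and matches the paper's own argument essentially verbatim: assume $3(pn+r)+1=k^2$, reduce modulo $p$ to get $3r+1\equiv k^2\pmod p$, and contradict the quadratic nonresidue hypothesis via Theorem~\ref{thm:3n1_mod4}. The additional remarks you make about $p>3$ and the count of admissible residues are fine but not needed for the corollary itself.
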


\begin{proof}
If $3(pn+r)+1 = k^2$, then $3r+1 \equiv k^2 \pmod{p}$, which would be a contradiction with $3r+1$ being a quadratic nonresidue modulo $p$.
\end{proof}	

For example, the following congruences hold for all $n\geq 0:$  
\begin{align*}
p_{\xi}(15n+k) & \equiv 0 \pmod{4}, \mbox{ for } k \in \{ 7, 13 \}, \\
p_{\xi}(21n+k) & \equiv 0 \pmod{4}, \mbox{ for } k \in \{ 10, 13, 19 \}, \\
p_{\xi}(33n+k) & \equiv 0 \pmod{4}, \mbox{ for } k \in \{ 7, 10, 13, 19, 28 \}.
\end{align*}

We next turn our attention to the arithmetic progression $4n+2$ to yield an additional infinite family of congruences.  

\begin{theorem} 
	\label{thm:4n2_mod4}
	For all $n \geq 0$, we have
	$$p_{\xi}(4n+2) \equiv  
	\begin{cases}
	(-1)^k \pmod{4}, & \mbox{if $n=6k(3k-1)$}, \\
	0 \pmod{4}, & \mbox{otherwise}.
	\end{cases}$$
\end{theorem}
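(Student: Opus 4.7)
The plan is to begin with the dissection \eqref{xi_4n+2}, reduce each of the two summands modulo $4$, and identify the combined result with a simple multiple of $f_{12}$. Euler's pentagonal number theorem applied to $f_{12}$ will then pinpoint exactly the arithmetic progression $n = 6k(3k-1)$ and give the sign pattern.

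The central tool will be the elementary congruence $f_k^4 \equiv f_{2k}^2 \pmod 4$, which follows from $(1-x)^4 - (1-x^2)^2 = -4x(1-x)^2$. Because every series on the right-hand side of \eqref{xi_4n+2} has constant term $1$ and is therefore invertible in $\mathbb{Z}[[q]]$, the same congruence yields the reciprocal form $1/f_k^2 \equiv f_k^2/f_{2k}^2 \pmod 4$. Equivalently, $\phi(-q^k)^2 \equiv 1 \pmod 4$, so $\phi(-q^k)$ is its own inverse modulo $4$; this single observation drives the entire computation.

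Using these substitutions, I would first show that the first summand reduces to $f_3^2 f_6 \pmod 4$: writing $f_3^6 = f_3^4 \cdot f_3^2 \equiv f_6^2 f_3^2 \pmod 4$ turns it into $f_6^7/(f_3^2 f_{12}^2)$; applying $1/f_3^2 \equiv f_3^2/f_6^2 \pmod 4$ rewrites this as $f_3^2 f_6^5/f_{12}^2$; and using $f_6^4 \equiv f_{12}^2 \pmod 4$ collapses it to $f_3^2 f_6$. For the second summand, an analogous cascade $f_1^4 \equiv f_2^2$, $f_4^4 \equiv f_8^2$, and $f_2^8 \equiv f_8^2 \pmod 4$ strips the powers of $f_1, f_2, f_4, f_8$ and reduces it to $f_{12}^2/(f_3^2 f_6)$; the same two identities then produce $f_3^2 f_6 \pmod 4$ a second time.

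Adding the two contributions gives $\sum_n p_\xi(4n+2) q^n \equiv 2 f_3^2 f_6 \pmod 4$. Since $f_3^2 \equiv f_6 \pmod 2$ and $f_6^2 \equiv f_{12} \pmod 2$, this simplifies to $2 f_{12} \pmod 4$. Finally, Euler's pentagonal number theorem $f_1 = \sum_{k \in \mathbb{Z}}(-1)^k q^{k(3k-1)/2}$ with $q \mapsto q^{12}$ yields $f_{12} = \sum_{k \in \mathbb{Z}}(-1)^k q^{6k(3k-1)}$, from which the stated characterization of $p_\xi(4n+2) \pmod 4$ follows upon comparing coefficients (noting that $p_\xi$ is even, so the sign $(-1)^k$ on the generating-function side contributes as $2(-1)^k \equiv 2 \pmod 4$). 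The main obstacle is the careful bookkeeping of the repeated modulo-$4$ substitutions; the saving grace is that the involution $\phi(-q^k) \equiv 1/\phi(-q^k) \pmod 4$ makes every reciprocal step essentially mechanical.
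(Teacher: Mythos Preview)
Your proposal is correct and follows essentially the same approach as the paper: starting from \eqref{xi_4n+2}, repeatedly applying $f_k^4 \equiv f_{2k}^2 \pmod 4$ (and its mod-$2$ variant $f_k^2 \equiv f_{2k}$) to collapse both summands down to $2f_{12} \pmod 4$, then invoking Euler's pentagonal number theorem. The paper merely orders the reductions slightly differently---it keeps the two summands as $f_6^7/(f_3^2 f_{12}^2)$ and $f_{12}^2/(f_3^2 f_6)$ and combines them into $2f_6^3/f_3^2$ before finishing---but the substance is identical.
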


\begin{proof}
	From \eqref{xi_4n+2}, we obtain
	\begin{align}
	\sum_{n=0}^{\infty} p_{\xi}(4n+2)q^n & \equiv \frac{f_{6}^7}{f_{3}^2f_{12}^2} +\frac{f_{12}^2}{f_{3}^2f_{6}} \equiv 2\frac{f_6^3}{f_3^2} \equiv 2f_6^2 \equiv 2f_{12} \pmod{4}.
	\label{Eq44}
	\end{align}
	Using Euler's identity (see \cite[Eq. (1.6.1)]{H})
	$$f_1 = \sum_{n=-\infty}^{\infty} (-1)^n q^{n(3n-1)/2},$$
	we obtain
	\begin{align*}
	\sum_{n=0}^{\infty} p_{\xi}(4n+2)q^n & \equiv \sum_{n=-\infty}^{\infty} (-1)^n q^{6n(3n-1)} \pmod{4},
	\end{align*}
	which concludes the proof.
\end{proof}

Theorem \ref{thm:4n2_mod4} yields an infinite family of congruences modulo $4$.

\begin{corollary}
	\label{thm:4n2_mod 4}
	Let $p > 3$ be a prime and $r$ an integer such that $2r+1$ is a quadratic nonresidue modulo $p$.   Then, for all $n \geq 0$,
	\begin{equation*}
	p_{\xi}(4(pn +r)+2) \equiv 0 \pmod{4}.
	\end{equation*}
\end{corollary}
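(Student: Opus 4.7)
The plan is to mimic the argument used for the earlier ``quadratic nonresidue'' corollaries: assume for contradiction that $p_\xi(4(pn+r)+2) \not\equiv 0 \pmod 4$ for some $n \geq 0$, and then extract a square relation modulo $p$ that makes $2r+1$ a quadratic residue.

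First I would invoke Theorem \ref{thm:4n2_mod4}: a nonzero residue modulo $4$ forces $pn + r = 6k(3k-1)$ for some integer $k$. The key algebraic step is to recast the quadratic form $6k(3k-1) = 18k^2 - 6k$ as a near-perfect square. Multiplying by $2$ and completing the square gives
$$2 \cdot 6k(3k-1) \;=\; 36k^2 - 12k \;=\; (6k-1)^2 - 1,$$
so the relation $pn+r = 6k(3k-1)$ becomes
$$(6k-1)^2 \;=\; 2(pn+r) + 1 \;=\; 2pn + 2r + 1.$$
Reducing modulo $p$ then yields
$$(6k-1)^2 \;\equiv\; 2r+1 \pmod{p}.$$

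Finally I would argue that this contradicts the hypothesis. If $2r+1 \not\equiv 0 \pmod p$, the displayed congruence exhibits $2r+1$ as a nonzero square mod $p$, i.e.\ a quadratic residue; and the degenerate case $2r+1 \equiv 0 \pmod p$ is likewise ruled out because a quadratic nonresidue is by convention a nonzero residue class. In either situation we obtain the desired contradiction, which establishes the corollary.

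There is no real obstacle here; the only mildly delicate point is noticing the completing-the-square identity $12 \cdot 6k(3k-1) + 1 = (6k-1)^2$ that converts the ``hexagonal-type'' exponent of Theorem \ref{thm:4n2_mod4} into a square. Once this is in place, the quadratic reciprocity-style count used after the previous two corollaries will immediately yield concrete example progressions (for instance, for $p=5$ the values of $r \in \{0,1,2,3,4\}$ with $2r+1$ a QNR mod $5$ are $r \in \{1,3\}$, producing congruences $p_\xi(20n+6) \equiv p_\xi(20n+14) \equiv 0 \pmod 4$).
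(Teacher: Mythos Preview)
Your argument is correct and is essentially the same as the paper's: both invoke Theorem~\ref{thm:4n2_mod4}, complete the square to get $2(pn+r)+1=(6k-1)^2$, and read off the contradiction with $2r+1$ being a quadratic nonresidue modulo $p$. One small slip: in your closing paragraph the identity should read $2\cdot 6k(3k-1)+1=(6k-1)^2$, not $12\cdot 6k(3k-1)+1$; this does not affect the main proof.
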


\begin{proof}
If $pn+r = 6k(3k-1)$, then $r \equiv 18k^2-6k \pmod{p}$. Thus, $2r+1 \equiv (6k-1)^2 \pmod{p}$, which contradicts the fact that $2r+1$ is a quadratic nonresidue modulo $p$.
\end{proof}

Thanks to Corollary \ref{thm:4n2_mod 4}, the following example congruences hold for all $n\geq 0:$  
\begin{align*}
p_{\xi}(20n + j) &\equiv  0 \pmod{4}, \textrm{\ \ for\ \ } j \in \{ 6, 14 \}, \\
p_{\xi}(28n + j) &\equiv  0 \pmod{4}, \textrm{\ \ for\ \ } j \in \{ 6, 10, 26 \}, \\
p_{\xi}(44n + j) &\equiv  0 \pmod{4}, \textrm{\ \ for\ \ } j \in \{ 14, 26, 34, 38, 42 \}, \\
p_{\xi}(52n + j) &\equiv  0 \pmod{4}, \textrm{\ \ for\ \ } j \in \{ 10, 14, 22, 30, 38, 42 \}.
\end{align*}

We now provide a mod 8 characterization for $p_{\xi}(3n).$  
\begin{theorem} For all $n \geq 0$, we have 
$$p_{\xi}(3n) \equiv  
\begin{cases}
1 \pmod{8}, & \mbox{if $n=0$} ,\\
6(-1)^k \pmod{8}, & \mbox{if $n=k^2$}, \\
4 \pmod{8}, & \mbox{if $n = 2k^2$, $n = 3k^2$, or $n = 6k^2$}, \\
0 \pmod{8}, & \mbox{otherwise}.
\end{cases}$$
	\label{T4}
\end{theorem}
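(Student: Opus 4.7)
The plan is to reduce the generating function $\sum_{n \geq 0} p_{\xi}(3n) q^n = f_2 f_3^4/(f_1^2 f_6^2)$, given by \eqref{xi_3n}, modulo $8$ to a transparent theta-type series, and then read off coefficients. Using \eqref{Eq18} we recognize $f_3^4/f_6^2 = \phi(-q^3)^2$ and $f_2/f_1^2 = 1/\phi(-q)$, so
$$\sum_{n \geq 0} p_{\xi}(3n) q^n = \frac{\phi(-q^3)^2}{\phi(-q)},$$
and the crux is eliminating the inverse of $\phi(-q)$ modulo $8$.

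The key lemma I will establish is $\phi(-q)^4 \equiv 1 \pmod{8}$. Replacing $q$ by $-q$ in \eqref{phi^2-2-dissec} gives $\phi(-q)^2 = \phi(q^2)^2 - 4q\psi(q^4)^2$; squaring then yields $\phi(-q)^4 \equiv \phi(q^2)^4 \pmod{8}$. Iterating \eqref{phi^2-2-dissec} with $q$ replaced by $q^{2^j}$ shows $\phi(q^{2^j})^4 \equiv \phi(q^{2^{j+1}})^4 \pmod{8}$ for every $j \geq 0$, and since $\phi(q^{2^k})^4 = 1 + O(q^{2^k})$ as a formal power series, the lemma follows. As $\phi(-q)$ is invertible as a formal power series, this gives $1/\phi(-q) \equiv \phi(-q)^3 \pmod{8}$, whence
$$\sum_{n \geq 0} p_{\xi}(3n) q^n \equiv \phi(-q)^3 \phi(-q^3)^2 \pmod{8}.$$

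Next, set $\phi(-q) = 1 + 2a$ and $\phi(-q^3) = 1 + 2b$ with $a := \sum_{k \geq 1}(-1)^k q^{k^2}$ and $b := \sum_{k \geq 1}(-1)^k q^{3k^2}$. Binomial expansion gives $\phi(-q)^3 \equiv 1 + 6a + 4a^2 \pmod{8}$ and $\phi(-q^3)^2 = 1 + 4b + 4b^2$; multiplying these and discarding terms whose coefficients are divisible by $8$ (the cross terms $24ab$, $24ab^2$, $16a^2b$, $16a^2b^2$ all vanish) yields
$$\phi(-q)^3 \phi(-q^3)^2 \equiv 1 + 6a + 4a^2 + 4b + 4b^2 \pmod{8}.$$
A mod $2$ reduction then collapses the squared series: off-diagonal pairs $(j,k)$ and $(k,j)$ with $j \neq k$ cancel, leaving $a^2 \equiv \sum_{k \geq 1} q^{2k^2}$ and $b^2 \equiv \sum_{k \geq 1} q^{6k^2} \pmod{2}$, while $b \equiv \sum_{k \geq 1} q^{3k^2} \pmod 2$. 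Hence
$$\sum_{n \geq 0} p_{\xi}(3n) q^n \equiv 1 + 6\sum_{k \geq 1}(-1)^k q^{k^2} + 4\sum_{k \geq 1}\left(q^{2k^2} + q^{3k^2} + q^{6k^2}\right) \pmod{8}.$$

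To finish, I will verify that the exponent sets $\{k^2\}, \{2k^2\}, \{3k^2\}, \{6k^2\}$ for $k \geq 1$ are pairwise disjoint, since any coincidence would force one of $\sqrt{2}, \sqrt{3}, \sqrt{6}, \sqrt{3/2}$ to be rational. Reading off coefficients case by case then yields the stated four-part description. The main obstacle I expect is justifying the key lemma $\phi(-q)^4 \equiv 1 \pmod{8}$; once that 2-dissection argument is secured, the remainder is elementary expansion followed by a mod $2$ reduction.
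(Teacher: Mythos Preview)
Your proof is correct and follows essentially the same route as the paper: both reduce $\sum p_{\xi}(3n)q^n$ to $\phi(-q)^3\phi(-q^3)^2 \pmod{8}$, then expand binomially and collapse the squared sums modulo~$2$. The only difference is in justifying that reduction: the paper multiplies top and bottom by $f_1^6$ and invokes $f_1^8 \equiv f_2^4 \pmod 8$ directly (a standard Freshman's-dream consequence of $f_1^2\equiv f_2\pmod 2$), whereas you prove the equivalent fact $\phi(-q)^4\equiv 1\pmod 8$ by iterating \eqref{phi^2-2-dissec}; your added disjointness check of the exponent sets $\{k^2\},\{2k^2\},\{3k^2\},\{6k^2\}$ is a detail the paper leaves implicit.
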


\begin{proof}
By \eqref{xi_3n}, using \eqref{Rama1} and \eqref{Eq18}, we have
\begin{align*}
\sum_{n=0}^{\infty} p_{\xi}(3n)q^n & = \frac{f_1^6f_{2}f_{3}^4}{f_{1}^8f_{6}^2} \equiv \left( \frac{f_1^2}{f_{2}} \right)^3 \left( \frac{f_3^2}{f_{6}} \right)^2 \equiv \phi(-q)^3 \phi(-q^3)^2 \\
& \equiv \left( \sum_{n=-\infty}^{\infty} (-1)^n q^{n^2} \right)^3 \left( \sum_{n=-\infty}^{\infty} (-1)^n q^{3n^2} \right)^2 \\
& \equiv \left(1+ 2\sum_{n=1}^{\infty} (-1)^n q^{n^2} \right)^3 \left(1+ 2\sum_{n=1}^{\infty} (-1)^n q^{3n^2} \right)^2 \pmod{8}
\end{align*}
which yields
\begin{align*}
\sum_{n=0}^{\infty} p_{\xi}(3n)q^n & \equiv 1 + 6\sum_{n=1}^{\infty} (-1)^n q^{n^2} + 4 \left( \sum_{n=1}^{\infty} (-1)^n q^{n^2} \right)^2 \\ 
& \ \ \ \ + 4\sum_{n=1}^{\infty} (-1)^n q^{3n^2} + 4 \left( \sum_{n=1}^{\infty} (-1)^n q^{3n^2} \right)^2 \pmod{8}.
\end{align*}

Since 
$$\left( \sum_{n=1}^{\infty} (-1)^n q^{n^2} \right)^2 \equiv \sum_{n=-\infty}^{\infty} q^{2n^2} \pmod{2},$$
we have
$$\left( \sum_{n=1}^{\infty} (-1)^n q^{3n^2} \right)^2 \equiv \sum_{n=-\infty}^{\infty} q^{6n^2} \pmod{2}.$$
Therefore
\begin{align*}
\sum_{n=0}^{\infty} p_{\xi}(3n)q^n & \equiv 1 + 6\sum_{n=1}^{\infty} (-1)^n q^{n^2} + 4\sum_{n=1}^{\infty} q^{2n^2} \\ 
& \ \ \ \ + 4\sum_{n=1}^{\infty} (-1)^n q^{3n^2} + 4 \sum_{n=1}^{\infty} q^{6n^2} \pmod{8},
\end{align*}
which completes the proof.
\end{proof}

As with the prior results, Theorem \ref{T4} provides an effective way to yield an infinite family of congruences modulo $8$.

\begin{corollary} Let $p$ be a prime such that $p \equiv \pm 1 \pmod{24}$. Then, 
$$p_{\xi}(3(pn+r)) \equiv 0 \pmod{8},$$
if $r$ is a quadratic nonresidue modulo $p$.
\end{corollary}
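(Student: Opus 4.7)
The plan is to reduce the statement to Theorem~\ref{T4} and then carry out a clean case analysis on the exceptional values of $m$ therein. By Theorem~\ref{T4}, we have $p_{\xi}(3m) \equiv 0 \pmod{8}$ unless $m$ lies in one of the exceptional sets
\[
\{0\},\quad \{k^2 : k\geq 1\},\quad \{2k^2\},\quad \{3k^2\},\quad \{6k^2\}.
\]
Setting $m = pn+r$, I would show that when $r$ is a quadratic nonresidue modulo $p$ and $p\equiv \pm 1 \pmod{24}$, the integer $m$ avoids every one of these sets, so the congruence $p_{\xi}(3(pn+r))\equiv 0 \pmod{8}$ follows.

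First, because $r$ is a quadratic nonresidue modulo $p$ we have $r\not\equiv 0 \pmod{p}$, so taking $r\in\{1,\ldots,p-1\}$ gives $m\geq 1$, ruling out the case $m=0$. Next, suppose $pn+r=dk^2$ for some $d\in\{1,2,3,6\}$ and some integer $k$. Since $p\equiv \pm 1 \pmod{24}$ forces $p>3$, $\gcd(d,p)=1$, and reducing modulo $p$ yields
\[
r \equiv dk^2 \pmod{p},\qquad \text{i.e.,}\qquad d^{-1}r \equiv k^2 \pmod{p}.
\]
Hence $d^{-1}r$ is a quadratic residue modulo $p$. Because $r$ is a quadratic nonresidue, this forces $d$ itself to be a quadratic nonresidue modulo $p$.

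The key number-theoretic step is then to show that each of $2$, $3$, and $6$ is a quadratic residue modulo $p$ whenever $p\equiv \pm 1 \pmod{24}$. By the supplementary laws of quadratic reciprocity, $\bigl(\tfrac{2}{p}\bigr)=1$ iff $p\equiv \pm 1 \pmod{8}$, and $\bigl(\tfrac{3}{p}\bigr)=1$ iff $p\equiv \pm 1 \pmod{12}$. The hypothesis $p\equiv \pm 1 \pmod{24}$ implies both $p\equiv \pm 1 \pmod{8}$ and $p\equiv \pm 1 \pmod{12}$, so $2$ and $3$ are both quadratic residues modulo $p$, and therefore so is $6$. This contradicts $d\in\{1,2,3,6\}$ being a quadratic nonresidue, ruling out every remaining exceptional case and completing the proof.

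The main obstacle, such as it is, lies only in the number-theoretic verification that $p\equiv \pm 1 \pmod{24}$ is exactly the right modular condition to force $2$, $3$, and $6$ to be squares modulo $p$; once that is in place, the argument reduces to the same template used in the previous three corollaries of the paper, applied simultaneously across the four exceptional scalings appearing in Theorem~\ref{T4}.
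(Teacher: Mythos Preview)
Your proof is correct and follows essentially the same approach as the paper's: reduce to Theorem~\ref{T4}, use $p\equiv\pm1\pmod{24}$ to force $2$, $3$, and $6$ to be quadratic residues modulo~$p$, and then derive a contradiction from $pn+r=dk^2$ with $d\in\{1,2,3,6\}$. The only cosmetic difference is that the paper phrases the contradiction via $3(pn+r)=jk^2$ and checks each $j$ by hand, whereas you argue uniformly via the Legendre symbol that $d$ would have to be a nonresidue; the two arguments are equivalent.
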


\begin{proof}
Since $p \equiv \pm 1 \pmod{8}$ and $p \equiv \pm 1 \pmod{12}$, it follows that $2$ and $3$ are quadratic 
residues modulo $p$. Thus, $r, 2r, 3r$, and $6r$ are quadratic 
nonresidues modulo $p$. Indeed, according to the properties of Legendre's symbol, for $j\in \{1,2,3,6\}$, we have
$$\left( \frac{jr}{p}\right) = \left( \frac{j}{p}\right)\left( \frac{r}{p}\right) = \left( \frac{r}{p}\right) = -1.$$
It follows that we cannot have 
$3(pn+r) = jk^2,$ for some $k \in \mathbb{N}$ and $j \in \{ 1, 2, 3, 6 \}$. In fact, 
$3(pn+r) = jk^2$ would imply 
$3(pn+r) \equiv 3r \equiv jk^2 \pmod{p}$. However, for $j = 1, 2, 3, 6$, this would imply that $3r$, $6r$, $r$, or $2r$, respectively, is a quadratic residue modulo $p$, which would be a contradiction since $2, 3$, and $6$ are quadratic residues modulo $p$. The result follows from Theorem \ref{T4}.
\end{proof}

As an example, we note that, for $p=23$ and all $n\geq 0,$ we have
\begin{align*}
p_{\xi}(69n+k) & \equiv 0 \pmod{8}, \mbox{ for } k \in \{ 15, 21, 30, 33, 42, 45, 51, 57, 60, 63, 66 \}.
\end{align*}

\begin{theorem} For all $n \geq 0$, we have 
	$$p_{\xi}(12n+4) \equiv p_{\xi}(3n+1) \pmod{8}.$$
\label{T7}
\end{theorem}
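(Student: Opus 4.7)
My plan is to read off an explicit formula for $\sum_{n\ge 0} p_{\xi}(12n+4) q^n$ from the 4-dissection \eqref{xi_4n}, and then compare it with \eqref{xi_3n+1} modulo $8$.

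Since $12n+4 = 4(3n+1)$, the series $\sum_{n\ge 0} p_{\xi}(12n+4) q^{3n+1}$ is precisely the ``$r=1$'' trisection of the right-hand side of \eqref{xi_4n}. The first two terms there, $4q^2 f_{12}^6/(f_3^2 f_6^3)$ and $-q^2\omega(-q^3)$, are $q^2$ times series in $q^3$ and hence contribute only to the residue class $q^{3k+2}$. So only the piece $f_2^4 f_6^5/(f_1^2 f_3^4 f_{12}^2)$ feeds into $\sum p_{\xi}(12n+4) q^{3n+1}$.

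For that piece I use \eqref{Rama2} to write $f_2^4/f_1^2 = \psi(q)^2$ and apply the 3-dissection \eqref{3_dissec_psi}, writing $\psi(q) = A + qB$ with $A = f_6 f_9^2/(f_3 f_{18})$ and $B = f_{18}^2/f_9$, both series in $q^3$. Squaring gives $\psi(q)^2 = A^2 + 2qAB + q^2 B^2$. Since $f_6^5/(f_3^4 f_{12}^2)$ is also a series in $q^3$, the $q^{3n+1}$ part of $\psi(q)^2 \cdot f_6^5/(f_3^4 f_{12}^2)$ is $2qAB \cdot f_6^5/(f_3^4 f_{12}^2) = 2q \cdot f_6^6 f_9 f_{18}/(f_3^5 f_{12}^2)$. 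Dividing by $q$ and replacing $q^3$ by $q$ yields the clean identity
$$\sum_{n\ge 0} p_{\xi}(12n+4)\, q^n \;=\; 2\,\frac{f_2^6 f_3 f_6}{f_1^5 f_4^2}.$$

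Together with \eqref{xi_3n+1}, the desired congruence $p_{\xi}(12n+4) \equiv p_{\xi}(3n+1) \pmod{8}$ then reduces to showing $\frac{f_2^6}{f_1^4 f_4^2} \equiv 1 \pmod{4}$. Applying the congruence $f_k^4 \equiv f_{2k}^2 \pmod{4}$ twice (in the same spirit as the manipulation in the proof of Theorem \ref{T2}) gives
$$\frac{f_2^6}{f_1^4 f_4^2} \equiv \frac{f_2^6}{f_2^2\, f_4^2} = \frac{f_2^4}{f_4^2} \equiv 1 \pmod{4},$$
which closes the argument. No step is particularly delicate; the only moderately involved computation is the trisection in the middle paragraph, and the main obstacle is just bookkeeping the fact that $f_6^5/(f_3^4 f_{12}^2)$ lives entirely in powers of $q^3$, so that the 3-dissection of the full product is governed by that of $\psi(q)^2$ alone.
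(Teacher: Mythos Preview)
Your proof is correct. You arrive at exactly the same explicit identity
\[
\sum_{n\ge 0} p_{\xi}(12n+4)\,q^n \;=\; 2\,\frac{f_2^6 f_3 f_6}{f_1^5 f_4^2}
\]
as the paper does, and your mod~$8$ reduction via $f_2^6/(f_1^4 f_4^2)\equiv 1\pmod 4$ is equivalent to the paper's manipulation $2\frac{f_1^3 f_2^6 f_3 f_6}{f_1^8 f_4^2}\equiv 2\frac{f_3 f_6}{f_1}\pmod 8$.

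The difference lies only in how the intermediate identity is obtained. The paper starts from \eqref{xi_3n+1} and bisects twice, using the $2$-dissections \eqref{Eq8} and \eqref{Eq9} for $f_3/f_1$ and $f_3^2/f_1^2$; along the way it records the auxiliary identity \eqref{Eq46} for $\sum p_{\xi}(6n+4)q^n$, which is reused later in the paper. You instead start from the $4$-dissection \eqref{xi_4n} and trisect once via \eqref{3_dissec_psi}, after observing that the first two summands in \eqref{xi_4n} lie in the $q^{3k+2}$ sector. Your route is a bit more direct for this theorem in isolation; the paper's route has the side benefit of producing \eqref{Eq46}, which feeds into the proof of \eqref{Eq26}.
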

	
\begin{proof}
	Initially we use \eqref{Eq8} to extract the odd part on both sides of \eqref{xi_3n+1}. The resulting identity is
	\begin{align}
	\sum_{n=0}^{\infty} p_{\xi}(6n+4)q^n = 2\frac{f_{3}^2f_4^2f_{24}}{f_{1}^2f_8f_{12}}.
	\label{Eq46}
	\end{align}
	Using \eqref{Eq9} in \eqref{Eq46}, we obtain 
	\begin{align*}
	\sum_{n=0}^{\infty} p_{\xi}(12n+4)q^n = 2\frac{f_{2}^6f_3f_{6}}{f_{1}^5f_4^2} \equiv 2\frac{f_1^3f_{2}^6f_3f_{6}}{f_{1}^8f_4^2} \equiv 2\frac{f_3f_{6}}{f_{1}} \pmod{8}.
	\end{align*}
	The result follows using \eqref{xi_3n+1}.
\end{proof}
	
	Now we present complete characterizations of $p_{\xi}(48n+4)$ and $p_{\xi}(12n+1)$ modulo 8.
	
	\begin{theorem} For all $n \geq 0$, we have 
		$$p_{\xi}(48n+4) \equiv p_{\xi}(12n+1) \equiv  
		\begin{cases}
		2(-1)^k \pmod{8}, & \mbox{if $n=k(3k-1)$}, \\
		0 \pmod{8}, & \mbox{otherwise}.
		\end{cases}$$
	\label{T8}
	\end{theorem}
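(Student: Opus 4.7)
The equivalence $p_\xi(48n+4) \equiv p_\xi(12n+1) \pmod 8$ is immediate: substitute $n \mapsto 4n$ in Theorem \ref{T7}. So the bulk of the work is to establish the piecewise characterization of $p_\xi(12n+1) \pmod 8$.

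My plan is to produce a clean closed form for $\sum_{n\ge 0} p_\xi(12n+1) q^n$ by iteratively 2-dissecting the identity $\sum_{n\ge 0} p_\xi(3n+1) q^n = 2 f_3 f_6/f_1$ from Theorem \ref{T0}. First, I would substitute the 2-dissection of $f_3/f_1$ given by \eqref{Eq8}, multiply through by $f_6$ (which is a series in $q^6$, hence contributes only even powers of $q$), extract the even part, and replace $q^2$ by $q$, obtaining a closed form for $\sum_n p_\xi(6n+1) q^n$ as a scalar multiple of an eta-quotient. Then I would substitute the 2-dissection of $f_3^2/f_1^2$ given by \eqref{Eq9} into that expression, again extract the even part, and replace $q^2$ by $q$. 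Several terms collapse neatly, and the final outcome is
$$\sum_{n\ge 0} p_\xi(12n+1) q^n \;=\; 2\,\frac{f_2^3 f_6^4}{f_1^4 f_{12}^2}.$$

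From here the $\pmod 8$ reduction is a short two-step bookkeeping. Using the standard congruence $f_k^{2^j} \equiv f_{2k}^{2^{j-1}} \pmod{2^j}$ in the cases $(k,j)=(1,2)$ and $(k,j)=(6,2)$, one has $f_1^4 \equiv f_2^2 \pmod 4$ (so $2/f_1^4 \equiv 2/f_2^2 \pmod 8$) and $f_6^4 \equiv f_{12}^2 \pmod 4$ (so $2 f_6^4/f_{12}^2 \equiv 2 \pmod 8$). Chaining these reductions collapses the generating function to
$$\sum_{n\ge 0} p_\xi(12n+1) q^n \;\equiv\; 2 f_2 \pmod 8.$$
Finally, applying Euler's pentagonal number theorem in the form $f_2 = \sum_{k=-\infty}^{\infty} (-1)^k q^{k(3k-1)}$ reads off precisely the piecewise formula in the statement.

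The main obstacle is the first stage: verifying that the two successive 2-dissections really do consolidate into the remarkably clean eta-quotient $2 f_2^3 f_6^4/(f_1^4 f_{12}^2)$. This requires careful multiplication and cancellation of monomials in the $f_k$ at each step, but no new ideas beyond \eqref{Eq8} and \eqref{Eq9}. Once the closed form is in hand, the modular reduction and the identification of coefficients via Euler's theorem are routine.
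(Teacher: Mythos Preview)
Your proposal is correct and follows essentially the same route as the paper: invoke Theorem~\ref{T7} for the first congruence, then 2-dissect \eqref{xi_3n+1} via \eqref{Eq8} and \eqref{Eq9} in succession to obtain the exact identity $\sum_{n\ge 0} p_\xi(12n+1)q^n = 2f_2^3 f_6^4/(f_1^4 f_{12}^2)$, reduce this to $2f_2 \pmod 8$, and read off the coefficients using Euler's pentagonal number theorem. (The paper's proof cites \eqref{Jacobi} at that last step, but what is actually applied is Euler's identity, exactly as you say.)
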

	
	\begin{proof}
		The first congruence follows directly from Theorem \ref{T7}. Replacing \eqref{Eq8} in \eqref{xi_3n+1}, we obtain
		\begin{align*}
		\sum_{n=0}^{\infty} p_{\xi}(3n+1)q^n = 2\displaystyle\frac{f_{4}f_{6}^2f_{16}f_{24}^{2}}{f_{2}^{2}f_{8}f_{12}f_{48}} + 2q\displaystyle\frac{f_{6}^2f_{8}^{2}f_{48}}{f_{2}^{2}f_{16}f_{24}}.
		\end{align*}
		Extracting the terms of the form $q^{2n}$, we have
		\begin{align*}
		\sum_{n=0}^{\infty} p_{\xi}(6n+1)q^{2n} = 2\displaystyle\frac{f_{4}f_{6}^2f_{16}f_{24}^{2}}{f_{2}^{2}f_{8}f_{12}f_{48}},
		\end{align*}
		which, after replacing $q^2$ by $q$, yields
		\begin{align}
		\sum_{n=0}^{\infty} p_{\xi}(6n+1)q^{n} = 2\displaystyle\frac{f_{2}f_{3}^2f_{8}f_{12}^{2}}{f_{1}^{2}f_{4}f_{6}f_{24}}.
		\label{Eq28}
		\end{align}
		Now we use \eqref{Eq9} to obtain
		\begin{align*}
		\sum_{n=0}^{\infty} p_{\xi}(12n+1)q^{n} & = 2\displaystyle\frac{f_{2}^3f_{6}^4}{f_{1}^4f_{12}^2} &  \\
		& \equiv 2f_2  \equiv 2 \sum_{n=-\infty}^{\infty} (-1)^n q^{n(3n-1)} \pmod{8}, & (\mbox{by \eqref{Jacobi}})
		\end{align*}
		which completes the proof.
	\end{proof}

Theorem \ref{T8} also provides an effective way to yield an infinite family of congruences modulo $8$.
	
		\begin{corollary} For all primes $p > 3$ and all $n\geq 0$, we have
		$$p_{\xi}(48(pn+r)+4) \equiv p_{\xi}(12(pn+r)+1) \equiv 0 \pmod{8},$$
		if $12r+1$ is a quadratic nonresidue modulo $p$.
	\end{corollary}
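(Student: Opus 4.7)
The plan is to follow the same template that produced the three preceding corollaries in this section: apply Theorem \ref{T8} to reduce the statement to showing that the equation $pn + r = k(3k-1)$ has no solution in integers $n \geq 0$ and $k$, and then derive a contradiction with the quadratic nonresidue hypothesis by a completing-the-square argument modulo $p$.

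Concretely, I would assume for contradiction that $pn + r = k(3k-1)$ for some integer $k$. Multiplying by $12$ and adding $1$ gives
$$12(pn+r) + 1 \;=\; 36k^2 - 12k + 1 \;=\; (6k-1)^2.$$
Reducing modulo $p$, which is legitimate since $p > 3$ ensures $\gcd(12,p) = 1$, I obtain $12r + 1 \equiv (6k-1)^2 \pmod{p}$. This makes $12r+1$ a quadratic residue modulo $p$, contradicting the hypothesis. Hence no such representation of $pn+r$ exists, and Theorem \ref{T8} immediately yields both $p_{\xi}(48(pn+r)+4) \equiv 0 \pmod{8}$ and $p_{\xi}(12(pn+r)+1) \equiv 0 \pmod{8}$.

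There is essentially no obstacle here. The only point requiring any thought is the choice of multiplier $12$ needed to complete the square in the quadratic $k(3k-1)$; this is exactly what forces the hypothesis to be phrased in terms of $12r+1$ rather than $r$ itself, in direct parallel with the appearance of $2r+1$ in the earlier pentagonal-number-type corollary and of $3r$, $3r+1$ in the square-indexed ones. The proof is thus a short, purely number-theoretic epilogue to Theorem \ref{T8}.
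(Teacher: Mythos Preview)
Your proof is correct and essentially identical to the paper's own argument: assume $pn+r=k(3k-1)$, complete the square to obtain $12r+1\equiv(6k-1)^2\pmod{p}$, contradict the nonresidue hypothesis, and invoke Theorem~\ref{T8}. The only difference is cosmetic---you multiply the full equation by $12$ before reducing, whereas the paper first reduces $r\equiv 3k^2-k\pmod p$ and then multiplies---but the content is the same.
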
	

\begin{proof}
Let $p>3$ be a prime and $12r+1$ a quadratic nonresidue modulo $p$. If $pn+r = k(3k-1)$, then $r \equiv 3k^2-k \pmod{p}$, which implies that $12r+1 \equiv (6k-1)^2 \pmod{p}$, a contradiction. The result follows from Theorem \ref{T8}.
\end{proof}

\section{Additional congruences}
\label{sec:addl_congs}

In this section, we prove several additional Ramanujan--like congruences that are not included in the results of the previous section.

\begin{theorem} For all $n \geq 0$, we have
\begin{align}
p_{\xi}(24n+19) & \equiv 0 \pmod{3}, \label{Eq12} \\ 
p_{\xi}(27n+18) & \equiv 0 \pmod{3}, \textrm{\ \ and} \label{Eq13} \\
p_{\xi}(72n+51) & \equiv 0 \pmod{3}. \label{Eq14}
\end{align}
\label{T5}
\end{theorem}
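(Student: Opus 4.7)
For all three congruences, the plan is to iteratively dissect the generating functions supplied by Theorem \ref{T0} and track coefficients modulo $3$, arriving each time at a piece that is either divisible by $3$ or a power series in $q^2$ (so all its odd coefficients vanish). For \eqref{Eq12}, I start from $\sum p_{\xi}(3m+1)q^m = 2f_3f_6/f_1$ given by \eqref{xi_3n+1}. Applying \eqref{Eq8} and taking the even part yields a formula for $\sum p_{\xi}(6m+1)q^m$; applying \eqref{Eq9} to that and extracting the odd part yields $\sum p_{\xi}(12m+7)q^m = 4\,f_3 f_4^2 f_6/f_1^3$. A final invocation of \eqref{Eq11} produces a $2$-dissection whose coefficients are $4$ and $12$, so modulo $3$ only the first term survives, and it depends only on $q^2$. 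Since $24n+19 = 12(2n+1)+7$, the odd coefficients are precisely $p_{\xi}(24n+19)$, and these must be $\equiv 0 \pmod 3$.

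For \eqref{Eq13}, I write $\sum p_{\xi}(3n)q^n = (f_3^4/f_6^2)/\phi(-q)$ via \eqref{xi_3n} and \eqref{Eq18}, then substitute \eqref{3_dissec_1/phi}. Reading off the $q^{3m}$ component and replacing $q^3$ by $q$ gives $\sum p_{\xi}(9m)q^m = f_2^2 f_3^6/(f_1^4 f_6^3) \equiv \phi(-q^9)/\phi(-q)^2 \pmod 3$. Because $\phi(-q^9)$ is a power series in $q^9$, the $3$-dissection of the right-hand side is controlled by that of $1/\phi(-q)^2$, which I compute by squaring \eqref{3_dissec_1/phi}. Collecting the part whose exponents are $\equiv 2 \pmod 3$ yields $q^2(B^2 + 2AC) \pmod 3$, where $A,B,C$ denote the three constituents of \eqref{3_dissec_1/phi}. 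A direct computation reveals the identity $B^2 = AC$, whence $B^2 + 2AC = 3AC \equiv 0 \pmod 3$, and so $p_{\xi}(9(3n+2)) = p_{\xi}(27n+18) \equiv 0 \pmod 3$.

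For \eqref{Eq14}, the $q^{3m+2}$ component of the same $3$-dissection gives $\sum p_{\xi}(9m+6)q^m = 4\,f_6^3/f_1^2 \equiv f_{18}/f_1^2 \pmod 3$. Using \eqref{Eq17} and extracting the odd part, then simplifying (via the substitution $q^2 \mapsto q$ and the congruence $f_9 \equiv f_3^3 \pmod 3$), yields $\sum p_{\xi}(18k+15)q^k \equiv 2\,f_2^2 f_3^2 f_8^2/(f_1^2 f_4) \pmod 3$. Applying \eqref{Eq9} and taking the even part produces $\sum p_{\xi}(36j+15)q^j \equiv 2\,f_2^3 f_3 f_4 f_6^2/(f_1^3 f_{12}) \pmod 3$; a final use of \eqref{Eq11} gives a $2$-dissection whose coefficients are $2$ and $6$, so modulo $3$ only the first term survives and it is a series in $q^2$. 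Since $72n+51 = 36(2n+1)+15$, the odd coefficients are precisely $p_{\xi}(72n+51)$, and these vanish modulo $3$. The main obstacle across the three is the key observation for \eqref{Eq13}: noticing the identity $B^2 = AC$ among the three pieces of \eqref{3_dissec_1/phi} is what forces the relevant arithmetic progression to vanish modulo $3$.
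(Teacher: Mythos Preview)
Your arguments for \eqref{Eq12} and \eqref{Eq14} are correct and follow the paper almost exactly: the same chain of $2$-dissections via \eqref{Eq8}, \eqref{Eq9}, \eqref{Eq11}, and \eqref{Eq17} leads to the same intermediate identities (in particular $\sum p_{\xi}(12n+7)q^{n}=4f_3f_4^2f_6/f_1^3$ and $\sum p_{\xi}(36n+15)q^{n}\equiv 2f_2^3f_3f_4f_6^2/(f_1^3f_{12})\pmod 3$), and the vanishing of the odd part follows from the coefficient $3$ in \eqref{Eq11}.

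For \eqref{Eq13} your route is genuinely different from the paper's. After arriving at $\sum p_{\xi}(9n)q^{n}=f_2^2f_3^6/(f_1^4f_6^3)$, the paper reduces modulo $3$ via $f_1^3\equiv f_3$ to obtain $\psi(q)\cdot f_3^5/f_6^3$, and then invokes the elementary fact that the triangular numbers $n(n+1)/2$ are never $\equiv 2\pmod 3$; since the second factor is a series in $q^3$, the $q^{3n+2}$ coefficients vanish modulo $3$. You instead reduce to $\phi(-q^9)/\phi(-q)^2$, square the $3$-dissection \eqref{3_dissec_1/phi}, and kill the residue-$2$ piece by the algebraic identity $B^2=AC$ (which indeed holds: both equal $4f_6^6f_9^6/f_3^{14}$), so that $B^2+2AC=3AC$. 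Both arguments are valid; the paper's is shorter and relies only on the shape of $\psi(q)$, while yours uncovers a pleasant exact identity among the three pieces of \eqref{3_dissec_1/phi} that may be of independent use when squaring that dissection in other contexts.
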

\begin{proof} Using \eqref{Eq9} we can now 2-dissect \eqref{Eq28} to obtain
\begin{align*}
\sum_{n=0}^{\infty} p_{\xi}(6n+1)q^{n} = 2\displaystyle\frac{f_{4}^3f_{12}^4}{f_{1}^{2}f_{2}^4f_{24}^2} + 4q\displaystyle\frac{f_{6}f_{8}^2f_{12}}{f_{2}^3},
\end{align*}
from which we have
\begin{align*}
\sum_{n=0}^{\infty} p_{\xi}(12n+7)q^{2n+1} = 4q\displaystyle\frac{f_{6}f_{8}^2f_{12}}{f_{2}^3}.
\end{align*}
Now, dividing both sides of the above expression by $q$ and replacing $q^2$ by $q$, we obtain
\begin{align}
\sum_{n=0}^{\infty} p_{\xi}(12n+7)q^{n} = 4\displaystyle\frac{f_{3}f_{4}^2f_{6}}{f_{1}^3}.
\label{Eq10}
\end{align}
Using \eqref{Eq11} we rewrite \eqref{Eq10} as
\begin{align*}
\sum_{n=0}^{\infty} p_{\xi}(12n+7)q^{n} = 4\displaystyle\frac{f_{4}^8f_{6}^4}{f_{2}^{9}f_{12}^2} + 12q\displaystyle\frac{f_{4}^4f_{6}^2f_{12}^2}{f_{2}^{7}}.
\end{align*}
Taking the odd parts on both sides of the last equation, we are left with
\begin{align*}
\sum_{n=0}^{\infty} p_{\xi}(24n+19)q^{n} = 12\displaystyle\frac{f_{2}^4f_{3}^2f_{6}^2}{f_{1}^{7}},
\end{align*}
which proves \eqref{Eq12}.

In order to prove \eqref{Eq13}, we use \eqref{3_dissec_1/phi} to extract the terms of the form $q^{3n}$ of \eqref{xi_3n}. The resulting identity is
\begin{align*}
\sum_{n=0}^{\infty} p_{\xi}(9n)q^{3n} & = \frac{f_{6}^2f_{9}^6}{f_{3}^4f_{18}^3},
\end{align*}
which, after replacing $q^3$ by $q$ and using \eqref{Rama2}, yields
\begin{align*}
\sum_{n=0}^{\infty} p_{\xi}(9n)q^{n} & = \frac{f_{2}^2f_{3}^6}{f_{1}^4f_{6}^3} \equiv \frac{f_{2}^2f_{3}^5}{f_{1}f_{6}^3}  = \psi(q)\frac{f_{3}^5}{f_{6}^3} \pmod{3}.
\end{align*}
By \eqref{Rama2}, we have
\begin{align*}
\sum_{n=0}^{\infty} p_{\xi}(9n)q^{n} & \equiv \frac{f_{3}^5}{f_{6}^3} \sum_{n=0}^{\infty} q^{n(n+1)/2} \pmod{3}.
\end{align*}
Since $n(n+1)/2 \not\equiv 2 \pmod{3}$ for all $n\geq 0,$  all terms of the form $q^{3n+2}$ in the last expression have coefficients congruent to $0 \pmod{3}$, which proves \eqref{Eq13}.

We now prove \eqref{Eq14}. Replacing \eqref{3_dissec_1/phi} in \eqref{xi_3n} and extracting the terms of the form $q^{3n+2}$, we obtain
\begin{align}
\sum_{n=0}^{\infty} p_{\xi}(9n+6)q^{3n+2} = 4q^2\displaystyle\frac{f_{18}^3}{f_{3}^2}.
\label{Eq15}
\end{align}
Dividing both sides of \eqref{Eq15} by $q^2$ and replacing $q^3$ by $q$, we have
\begin{align}
\sum_{n=0}^{\infty} p_{\xi}(9n+6)q^{n} = 4\displaystyle\frac{f_{6}^3}{f_{1}^2}.
\label{Eq16}
\end{align}
Now we use \eqref{Eq17} to extract the odd part of \eqref{Eq16} and obtain
\begin{align*}
\sum_{n=0}^{\infty} p_{\xi}(18n+15)q^{n} = 8\displaystyle\frac{f_{2}^2f_3^3f_8^2}{f_{1}^5f_4}.
\end{align*}
Since $f_1^3 \equiv f_3 \pmod{3}$, we have
\begin{align*}
\sum_{n=0}^{\infty} p_{\xi}(18n+15)q^{n} \equiv 2\displaystyle\frac{f_{2}^2f_3^2f_8^2}{f_{1}^2f_4} \pmod{3}.
\end{align*}
Using \eqref{Eq9} we obtain
\begin{align*}
\sum_{n=0}^{\infty} p_{\xi}(36n+15)q^{n} \equiv 2\displaystyle\frac{f_{2}^3f_3f_4f_6^2}{f_{1}^3f_{12}} \pmod{3}.
\end{align*}
Since the odd part of \eqref{Eq11} is divisible by $3$, then the coefficients of the terms of the form $q^{2n+1}$ in $\sum_{n=0}^{\infty} p_{\xi}(36n+15)q^{n}$ are congruent to $0$ modulo $3$. This completes the proof of \eqref{Eq14}.	
\end{proof}

\begin{theorem} For all $n \geq 0$, we have
\begin{align}
p_{\xi}(8n+6) & \equiv 0 \pmod{4}, \label{Eq41} \\
p_{\xi}(16n+10) & \equiv 0 \pmod{4}. \label{Eq41.2}
\end{align}
\end{theorem}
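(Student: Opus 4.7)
The plan is to deduce both congruences directly from the mod~$4$ classification of $p_\xi(4n+2)$ already established in Theorem \ref{thm:4n2_mod4}, which states that $p_\xi(4m+2) \equiv (-1)^k \pmod 4$ when $m = 6k(3k-1)$ and $p_\xi(4m+2) \equiv 0 \pmod 4$ otherwise. Since $8n+6 = 4(2n+1)+2$ and $16n+10 = 4(4n+2)+2$, both progressions fit into the form $4m+2$, and the task reduces to checking, in each case, that the relevant $m$ can never coincide with a value of $6k(3k-1)$.

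For \eqref{Eq41} I would set $m = 2n+1$. The expression $6k(3k-1)$ factors as $2\cdot 3k(3k-1)$ and is therefore even for every integer $k$, whereas $2n+1$ is odd. Hence $m \neq 6k(3k-1)$ for any $k$, and Theorem \ref{thm:4n2_mod4} immediately gives $p_\xi(8n+6) \equiv 0 \pmod 4$.

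For \eqref{Eq41.2} I would set $m = 4n+2$. Writing $6k(3k-1) = 18k^2 - 6k = 2\cdot 3k(3k-1)$ and noting that $3k(3k-1)$ is a product of two consecutive integers and therefore always even, we see that $6k(3k-1)$ is divisible by $4$. But $m = 4n+2 \equiv 2 \pmod 4$, so again $m \neq 6k(3k-1)$ for any $k$, and Theorem \ref{thm:4n2_mod4} yields $p_\xi(16n+10) \equiv 0 \pmod 4$.

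There is no real obstacle here: the hard work was already done in proving that $\sum_{n\geq 0} p_\xi(4n+2)q^n \equiv 2f_{12} \pmod 4$ in \eqref{Eq44}. The only care needed is the elementary parity/divisibility analysis of $6k(3k-1)$, and in particular the observation that $6k(3k-1)$ is in fact divisible by $4$, which is what rules out the progression $16n+10$.
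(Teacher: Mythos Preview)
Your proof is correct and essentially follows the paper's approach: both arguments rest entirely on \eqref{Eq44}, i.e., $\sum_{n\ge0}p_\xi(4n+2)q^n\equiv 2f_{12}\pmod 4$. The paper simply extracts the odd part of this congruence (which vanishes, giving \eqref{Eq41}), then extracts the even part to get $\sum p_\xi(8n+2)q^n\equiv 2f_6\pmod 4$ and takes the odd part of that (again vanishing, giving \eqref{Eq41.2}); you instead use the coefficient-level restatement of \eqref{Eq44} given in Theorem~\ref{thm:4n2_mod4} and do an equivalent parity/divisibility check on $6k(3k-1)$, which amounts to the same dissection in different clothing.
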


\begin{proof}
Congruence \eqref{Eq41} follows directly by extracting the odd part of \eqref{Eq44}.  Extracting  the even part of \eqref{Eq44} yields
\begin{align*}
\sum_{n=0}^{\infty} p_{\xi}(8n+2)q^n &  \equiv 2f_{6} \pmod{4},
\end{align*}
from which \eqref{Eq41.2} follows. 
\end{proof}

We now prove a pair of unexpected congruences modulo 5 satisfied by $p_{\xi}(n).$  		
\begin{theorem} For all $n \geq 0$, we have
\begin{align}
p_{\xi}(45n+33) & \equiv 0 \pmod{5}, \label{Eq27} \\
p_{\xi}(45n+41) & \equiv 0 \pmod{5}. \label{Eq32}
\end{align}
\end{theorem}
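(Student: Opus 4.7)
For \eqref{Eq27}, I would start from equation \eqref{Eq16}, $\sum_{n \geq 0} p_{\xi}(9n+6) q^n = 4 f_6^3/f_1^2$, which was derived in the proof of Theorem \ref{T5}. Since $45n+33 = 9(5n+3)+6$, it suffices to show that $[q^{5n+3}](4 f_6^3/f_1^2) \equiv 0 \pmod 5$. Applying the standard congruence $f_1^5 \equiv f_5 \pmod 5$ gives $4 f_6^3/f_1^2 \equiv 4 f_1^3 f_6^3/f_5 \pmod 5$. By Jacobi's identity \eqref{Jacobi}, $f_1^3 = \sum_n (-1)^n (2n+1) q^{n(n+1)/2}$ and $f_6^3 = \sum_m (-1)^m (2m+1) q^{3m(m+1)}$. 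The multiplier $(2k+1)$ vanishes modulo $5$ exactly when $k \equiv 2 \pmod 5$, which is precisely the case in which $k(k+1)/2 \equiv 3 \pmod 5$; for $k \not\equiv 2 \pmod 5$ one checks directly that $k(k+1)/2 \pmod 5 \in \{0,1\}$. Since $6 \equiv 1 \pmod 5$, the same pattern applies to $3m(m+1)$. Hence both $f_1^3$ and $f_6^3$, reduced modulo $5$, are supported only on exponents $\equiv 0,1 \pmod 5$, and their product $f_1^3 f_6^3$ is supported only on exponents $\equiv 0,1,2 \pmod 5$. Multiplication by $1/f_5$ preserves residue classes modulo $5$, so $[q^{5n+3}](f_1^3 f_6^3/f_5) \equiv 0 \pmod 5$, proving \eqref{Eq27}.

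For \eqref{Eq32}, since $45n+41 = 9(5n+4)+5$, the parallel strategy requires a closed form for $\sum_{n \geq 0} p_{\xi}(9n+5) q^n$. My plan is to 3-dissect the generating function $\sum_{n \geq 0} p_{\xi}(3n+2) q^n = \omega(q) + f_6^4/(f_2 f_3^2)$ from Theorem \ref{T0} and extract the $q^{3n+1}$ component. For the theta piece, writing $f_6^4/(f_2 f_3^2) = \psi(q^3)^2/f_2$ reduces the task to isolating the middle class of the $3$-dissection of $1/f_2$ (which can be obtained by the substitution $q \mapsto q^2$ in a $3$-dissection of $1/f_1$), while the mock theta function $\omega(q)$ contributes the subsequence $\omega_{3n+1}$. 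Once the closed form is in hand, I would reduce modulo $5$ via $1/f_1^2 \equiv f_1^3/f_5$ and perform a Jacobi exponent analysis analogous to that above, aiming to show that the $q^{5n+4}$ coefficient vanishes. The main obstacle will be controlling the mock theta contribution from $\omega(q)$ compatibly with the theta-quotient side modulo $5$, since a $3$-dissection of $\omega(q)$ is not among the identities collected in Section \ref{sec:preliminaries}. A natural workaround is to bypass the explicit $3$-dissection of $\omega(q)$ by working directly from \eqref{Eq3}, namely $\xi(q) = q^2 \omega(q^3) + f_2^4/(f_1^2 f_6)$, and extracting the exponent class $45n+41$ via coordinated $9$- and $5$-dissections of the theta portion $f_2^4/(f_1^2 f_6)$, while handling the $\omega(q^3)$ contribution through a separate arithmetic argument modulo $5$.
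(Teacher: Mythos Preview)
Your argument for \eqref{Eq27} is correct and is essentially the paper's proof: start from \eqref{Eq16}, use $f_1^5\equiv f_5\pmod 5$, expand $f_1^3f_6^3$ via Jacobi's identity \eqref{Jacobi}, and analyse exponents modulo~$5$. Your remark that each of $f_1^3$ and $f_6^3$, reduced modulo~$5$, is supported only on exponents $\equiv 0,1\pmod 5$ is a tidy repackaging of the same case analysis the paper carries out.

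For \eqref{Eq32} you have been led astray by a typo in the stated theorem. The paper's own proof establishes $p_{\xi}(45n+42)\equiv 0\pmod 5$, not $45n+41$: since $45n+42=9(5n+4)+6$, one simply extracts the class $5n+4$ from \eqref{Eq16} by exactly the same computation as for \eqref{Eq27}. Indeed, your own observation already gives this for free: $f_1^3f_6^3\pmod 5$ is supported on exponents $\equiv 0,1,2\pmod 5$, so both the $5n+3$ and the $5n+4$ classes vanish.

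By reading $45n+41$ literally you were forced toward $p_{\xi}(9n+5)$, which lives in the $3n+2$ residue class and hence, via \eqref{xi_3n+2}, requires a $3$-dissection of $\omega(q)$. You correctly flag this as the central obstruction, and there is no way to resolve it with the identities collected in Section~\ref{sec:preliminaries}; your proposed workaround through \eqref{Eq3} does not actually avoid the problem, since the class $45n+41\equiv 2\pmod 3$ picks up a nontrivial contribution from $q^2\omega(q^3)$. The intended congruence is $p_{\xi}(45n+42)\equiv 0\pmod 5$, and your method for \eqref{Eq27} proves it with no extra work.
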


\begin{proof}
By \eqref{Eq16}, we have
\begin{align*}
\sum_{n=0}^{\infty} p_{\xi}(9n+6)q^{n} & = 4\frac{f_{6}^3}{f_{1}^2} = 4\frac{f_1^3f_{6}^3}{f_{1}^5} \equiv 4\frac{f_1^3f_{6}^3}{f_{5}} \pmod{5}.
\end{align*}
Thanks to Jacobi's identity \eqref{Jacobi} we know
$$f_1^3f_6^3 = \sum_{j,k=0}^{\infty} (-1)^{j+k} (2j+1)(2k+1) q^{3j(j+1)+k(k+1)/2}.$$
Note that, for all integers $j$ and $k$, $3j(j+1)$ and $k(k+1)/2$ are congruent to either $0$, $1$ or $3$ modulo $5$. The only way to obtain $3j(j+1)+k(k+1)/2 = 5n+3$ is the following:
\begin{itemize}
	\item $3j(j+1) \equiv 0 \pmod{5}$ and $k(k+1)/2 \equiv 3 \pmod{5}$, or
	\item $3j(j+1) \equiv 3 \pmod{5}$ and $k(k+1)/2 \equiv 0 \pmod{5}$.
\end{itemize}
Thus, $j \equiv 2 \pmod{5}$ or $k \equiv 2 \pmod{5}$ in all possible cases, and this means 
$$(2j+1)(2k+1) \equiv 0 \pmod{5}.$$
Therefore, for all $n \geq 0$, $p_{\xi}(45n+33) = p_{\xi}(9(5n+3)+6) \equiv 0 \pmod{5}$, which is \eqref{Eq27}.

In order to complete the proof of \eqref{Eq32}, we want to see when $$3j(j+1)+k(k+1)/2 = 5n+4.$$ Four possible cases arise:
\begin{itemize}
	\item $k \equiv 1 \pmod{5}$ and $j \equiv 2 \pmod{5}$,
	\item $k \equiv 3 \pmod{5}$ and $j \equiv 2 \pmod{5}$,
	\item $j \equiv 1 \pmod{5}$ and $k \equiv 2 \pmod{5}$, \textrm{\ \ or}
	\item $j \equiv 3 \pmod{5}$ and $k \equiv 2 \pmod{5}$.
\end{itemize}
In all four cases above, either $j \equiv 2 \pmod{5}$ or $k \equiv 2 \pmod{5}$. So $$(2j+1)(2k+1) \equiv 0 \pmod{5}$$ in all these cases. Therefore, $$p_{\xi}(45n+42) = p_{\xi}(9(5n+4)+6) \equiv 0 \pmod{5},$$ which completes the proof of \eqref{Eq32}.
\end{proof}
		
Next, we prove three congruences modulo 8 which are not covered by the above results.  
\begin{theorem} For all $n \geq 0$, we have
\begin{align}
p_{\xi}(16n+14) & \equiv 0 \pmod{8}, \label{Eq42} \\
p_{\xi}(24n+13) & \equiv 0 \pmod{8}, \label{Eq25} \\
p_{\xi}(24n+22) & \equiv 0 \pmod{8}.  \label{Eq26}
\end{align}
\end{theorem}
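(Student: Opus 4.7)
For \eqref{Eq25}, I would deduce it directly from Theorem~\ref{T8}. Writing $24n+13 = 12(2n+1)+1$, the theorem reduces the question to whether $2n+1 = k(3k-1)$ for some integer $k$. But $k(3k-1)$ is always even (one of $k$ or $3k-1$ must be), while $2n+1$ is odd. Hence no such $k$ exists, and $p_\xi(24n+13) \equiv 0 \pmod 8$ follows.

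For \eqref{Eq26}, my plan is to work at the level of the generating function. First I would derive
$$\sum_{n \ge 0} p_\xi(12n+10) q^n = 4\,\frac{f_2^3 f_3^2 f_{12}^2}{f_1^4 f_6^2}$$
as the odd part of \eqref{Eq46}, via identity \eqref{Eq9}. Next, I would 2-dissect the factor $f_3^2/f_1^4 = (f_3^2/f_1^2)\cdot(1/f_1^2)$ by substituting \eqref{Eq9} and \eqref{Eq17}. The odd part of that product consists of the two cross-terms, each carrying a factor $2q$; multiplied by the outer prefactor $4$, the resulting common prefactor is $8q$. Dividing by $q$ and substituting $q^2 \mapsto q$ then gives $\sum p_\xi(24n+22) q^n = 8\cdot(\text{integer series})$, proving \eqref{Eq26}.

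The hard part is \eqref{Eq42}. I plan to perform two successive 2-dissections on $\sum p_\xi(4n+2) q^n$ (given by \eqref{xi_4n+2}), using \eqref{Eq29} for $1/f_1^4$ and the $q \mapsto q^3$ substitution of \eqref{Eq17} for $1/f_3^2$. Extracting the odd part and substituting $q^2 \mapsto q$ yields an explicit four-term expression for $\sum p_\xi(8n+6)q^n$, in which two terms already carry prefactor $4$ while the remaining two have prefactor $2q$. Individually, these last two are not divisible by $4$, but applying $f_k^4 \equiv f_{2k}^2 \pmod 4$ shows that both collapse to the same expression $2q\cdot\frac{f_{12}f_{24}^2}{f_3^2 f_6^2}$, so their sum is divisible by $4$. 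This upgrades the whole expression to $\sum p_\xi(8n+6)q^n \equiv 4R(q) \pmod 8$ for an explicit three-term series $R$. A final reduction of $R$ modulo $2$ via $f_k^2 \equiv f_{2k} \pmod 2$ shows that two of its three terms coincide (and hence cancel), leaving a power series in $q^2$. Therefore the odd part of $R$ is $\equiv 0 \pmod 2$, the odd part of $4R$ is $\equiv 0 \pmod 8$, and extraction finally yields $p_\xi(16n+14) \equiv 0 \pmod 8$.

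The main obstacle is the delicate two-step cancellation required for \eqref{Eq42}: divisibility by $8$ does not arise from any single visible factor but only from two successive congruences, first modulo $4$ at the generating-function level and then modulo $2$ for the residual series $R$, both of which must be tracked carefully using the standard identities $f_k^2 \equiv f_{2k} \pmod 2$ and $f_k^4 \equiv f_{2k}^2 \pmod 4$.
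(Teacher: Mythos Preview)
Your plan is correct for all three congruences and is close in spirit to the paper's argument, with one genuinely different shortcut.

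For \eqref{Eq25}, your deduction from Theorem~\ref{T8} via the parity of $k(3k-1)$ is valid and more economical than the paper's route: the paper instead recomputes $\sum p_\xi(12n+1)q^n = 2\dfrac{f_2^3 f_6^4}{f_1^4 f_{12}^2}$ and then applies \eqref{Eq29} to show directly that the odd part carries an explicit factor~$8$. Your argument recycles the mod~$8$ characterization already established, avoiding any further dissection.

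For \eqref{Eq26} and \eqref{Eq42}, your approach is essentially the paper's. For \eqref{Eq26} the paper also starts from the analogue of your identity for $\sum p_\xi(12n+10)q^n$ (obtained from \eqref{Eq46} via \eqref{Eq9}) and then $2$-dissects; it happens to split the factor $f_3^2/f_1^4$ using \eqref{Eq31} and \eqref{Eq29} rather than your \eqref{Eq9} and \eqref{Eq17}, but either choice exposes the factor~$8$ in the odd part. For \eqref{Eq42} the paper first reduces \eqref{xi_4n+2} modulo~$8$ (using $f_k^8\equiv f_{2k}^4$) so that the awkward $f_3^6$ in the denominator becomes $f_3^2$ in the numerator, then dissects via \eqref{Eq17}, \eqref{Eq31}, and \eqref{phi^2-2-dissec}; this yields three terms, two with prefactor~$2$ that cancel mod~$8$ and one with prefactor~$4$ that reduces mod~$2$ to a series in $q^2$. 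Your version keeps the exact expression longer and dissects $1/f_3^6$ directly, producing four terms rather than three, but the subsequent two-step cancellation (first mod~$4$, then mod~$2$) is the same mechanism. One small point: your description names only \eqref{Eq29} for $1/f_1^4$ and \eqref{Eq17}$_{q\mapsto q^3}$ for $1/f_3^2$, but the first summand of \eqref{xi_4n+2} contains $1/f_3^6 = (1/f_3^4)(1/f_3^2)$, so you will also need \eqref{Eq29}$_{q\mapsto q^3}$ (or an equivalent) to handle the $1/f_3^4$ factor; once that is made explicit, the four-term count and the cancellations you describe go through.
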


\begin{proof} Initially we prove \eqref{Eq42}. From \eqref{xi_4n+2} and \eqref{Rama1} we have
\begin{align*}
\sum_{n=0}^{\infty} p_{\xi}(4n+2)q^n & \equiv \frac{f_{3}^2f_6^5}{f_{12}^2} +\frac{f_{12}^2}{f_{3}^2f_{6}} \phi(q)^2 \pmod{8}.
\end{align*}
Now we can use \eqref{Eq17}, \eqref{Eq31}, and \eqref{phi^2-2-dissec} to extract the terms involving $q^{2n+1}$ from both sides of the previous congruence:
\begin{align*}
\sum_{n=0}^{\infty} p_{\xi}(8n+6)q^{2n+1} & \equiv -2q^3\frac{f_{6}^6f_{48}^2}{f_{12}^2f_{24}} +2q^3\frac{f_4^{10}f_{12}^4f_{48}^2}{f_{2}^4f_{6}^6f_8^4f_{24}} +4q\frac{f_{8}^4f_{12}^2f_{24}^5}{f_{4}^2f_6^6f_{48}^2} \pmod{8}.
\end{align*}
After dividing both sides by $q$ and then replacing $q^2$ by $q$, we are left with
\begin{align*}
\sum_{n=0}^{\infty} p_{\xi}(8n+6)q^{n} & \equiv -2q\frac{f_{3}^6f_{24}^2}{f_{6}^2f_{12}} +2q\frac{f_2^{10}f_{6}^4f_{24}^2}{f_{1}^4f_{3}^6f_4^4f_{12}} +4\frac{f_{4}^4f_{6}^2f_{12}^5}{f_{2}^2f_3^6f_{24}^2} \\ 
& \equiv -2q\frac{f_{3}^6f_{24}^2}{f_{6}^2f_{12}} +2q\frac{f_3^{6}f_{6}^4f_{24}^2}{f_{3}^{12}f_{12}} +4\frac{f_{4}^4f_{12}^5}{f_{2}^2f_6f_{24}^2} \\ & \equiv 4\frac{f_{4}^3f_{12}}{f_6} \pmod{8},
\end{align*}
whose odd part is congruent to 0 modulo $8$, which implies \eqref{Eq42}.
	
In order to prove \eqref{Eq25}, we use \eqref{Eq9} to obtain the even part of identity \eqref{Eq28}, which is
\begin{align*}
\sum_{n=0}^{\infty} p_{\xi}(12n+1)q^{n} = 2\frac{f_{2}^3f_6^4}{f_1^4f_{12}^2}.
\end{align*}
Now, employing \eqref{Eq29}, we obtain the odd part of the last identity, which is
\begin{align*}
\sum_{n=0}^{\infty} p_{\xi}(24n+13)q^{n} = 8\frac{f_{2}^2f_3^4f_4^4}{f_1^7f_{6}^2},
\end{align*}
which implies \eqref{Eq25}.

Now we prove \eqref{Eq26}. We employ \eqref{Eq9} in \eqref{Eq46} to obtain 
\begin{align}
\sum_{n=0}^{\infty} p_{\xi}(12n+10)q^{n} = 4\frac{f_{2}^3f_3^2f_{12}^2}{f_{1}^4f_6},
\label{Eq30}
\end{align}
By \eqref{Eq31} and \eqref{Eq29}, we rewrite \eqref{Eq30} in the form
\begin{align*}
\sum_{n=0}^{\infty} p_{\xi}(12n+10)q^{n} = 4\frac{f_{2}^3f_{12}^2}{f_{6}^2} \left( \frac{f_4^{14}}{f_2^{14}f_8^4} +4q \frac{f_4^2f_8^4}{f_2^{10}} \right) \left( \frac{f_6f_{24}^5}{f_{12}^2f_{48}^2} -2q^3 \frac{f_6f_{48}^2}{f_24} \right),
\end{align*}
from which we obtain
\begin{align*}
\sum_{n=0}^{\infty} p_{\xi}(24n+22)q^{2n+1} = 4\frac{f_{2}^3f_{12}^2}{f_{6}^2} \left( -2q^3\frac{f_4^{14}f_6f_{48}^2}{f_2^{14}f_8^4f_{24}} +4q \frac{f_4^2f_6f_8^4f_{24}^5}{f_2^{10}f_{12}^2f_{48}^2} \right).
\end{align*}
Dividing both sides by $q$ and replacing $q^2$ by $q$, we are left with
\begin{align*}
\sum_{n=0}^{\infty} p_{\xi}(24n+22)q^{n} =  -8q\frac{f_2^{14}f_6^2f_{24}^2}{f_1^{11}f_3f_4^4f_{12}} +16 \frac{f_2^2f_4^4f_{12}^5}{f_1^{7}f_3f_{24}^2},
\end{align*}
which implies \eqref{Eq26}.
\end{proof}		
		
We close this section by proving a congruence modulo 9.  		
\begin{theorem} 
\label{thm:96_76}
For all $n \geq 0$, we have
\begin{align}
p_{\xi}(96n+76) & \equiv 0 \pmod{9}. \label{Eq43}
\end{align}
\end{theorem}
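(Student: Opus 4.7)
The plan is to derive an explicit formula for $\sum_{n=0}^\infty p_\xi(96n+76)\,q^n$ via a chain of three 2-dissections, and then to exhibit a factor of $9$ in every term of the resulting expression. The natural starting point is the identity
$$\sum_{n=0}^\infty p_\xi(12n+4)\,q^n = 2\,\frac{f_2^6 f_3 f_6}{f_1^5 f_4^2},$$
which was already established during the proof of Theorem \ref{T7}. Since $96n+76 = 12(8n+6)+4$, we need the coefficient of $q^{8n+6}$ on the right. Writing $8n+6 = 2(2(2n+1)+1)$, three successive 2-dissections isolate the desired subseries: first extract the even part (yielding $\sum p_\xi(24k+4)\,q^k$), then the odd part (yielding $\sum p_\xi(48m+28)\,q^m$), and finally the odd part once more (yielding $\sum p_\xi(96n+76)\,q^n$), replacing $q^2$ by $q$ after each step.

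For the first dissection I would factor $\frac{f_3}{f_1^5} = \frac{f_3}{f_1^3}\cdot\frac{1}{f_1^2}$ and apply identities \eqref{Eq11} and \eqref{Eq17}, then multiply by $\frac{f_2^6 f_6}{f_4^2}$ to produce a two-term expression for $\sum p_\xi(24k+4)\,q^k$. The remaining two dissections are handled analogously, using \eqref{Eq29} and \eqref{Eq31} together with the 2-dissection of $f_3^2$ that one obtains from $\phi(-q^3) = f_3^2/f_6$ and \eqref{phi-2-dissec}. Each step is routine but tedious, and produces an expression of the form $A(q) + q\,B(q)$ whose correct half must be carefully identified.

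The factor of $9$ is expected to come from the coefficients $3$ that appear explicitly in \eqref{Eq11} and, if used later, in \eqref{Eq45}. At the very first dissection, the $3q$ term of \eqref{Eq11} already donates one factor of $3$ to the surviving piece; a second such factor is anticipated from a later dissection step, so that after the three extractions every term of $\sum p_\xi(96n+76)\,q^n$ should inherit two independent factors of $3$.

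The main obstacle I anticipate is the sheer combinatorial bookkeeping, as each dissection doubles the number of terms and the eta-quotients grow lengthy. A secondary concern is that the two factors of $3$ must actually survive all simplifications; should a careful count only show a single factor of $3$ on some term, the congruence $f_1^3 \equiv f_3 \pmod 3$ (or equivalently, reading off the $-3q$ part of \eqref{Eq45}) can be applied to a surviving cube in that term to expose an additional factor of $3$, completing the argument modulo $9$.
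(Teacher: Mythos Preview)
Your overall route---start from $\sum_{n\ge 0} p_\xi(12n+4)\,q^n = 2f_2^6 f_3 f_6/(f_1^5 f_4^2)$ and perform successive $2$-dissections---is exactly the paper's. However, your expectation about where the factor of $9$ originates is off. At the first step you extract the \emph{even} part; writing $\frac{f_3}{f_1^3}\cdot\frac{1}{f_1^2}=(C_0+3qC_1)(D_0+2qD_1)$ via \eqref{Eq11} and \eqref{Eq17}, the even half is $C_0D_0+6q^2C_1D_1$, and the $C_0D_0$ piece carries no factor of $3$. So the explicit $3$ in \eqref{Eq11} does \emph{not} automatically survive into $\sum p_\xi(24k+4)\,q^k$, contrary to your claim.

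What the paper does---and what your fallback correctly anticipates---is to reduce modulo $9$ immediately after the first dissection, using $f_1^9\equiv f_3^3\pmod 9$ (and $f_2^9\equiv f_6^3$) to simplify the eta-quotients; this congruence is the engine of the proof, not a backup. The paper also factors the first step as $\frac{f_3}{f_1}\cdot\frac{1}{f_1^4}$ and invokes \eqref{Eq8} and \eqref{Eq29} rather than \eqref{Eq11} and \eqref{Eq17}, though your factorisation would work equally well. After one further dissection (via \eqref{Eq38} and \eqref{Eq45}) the paper arrives at
\[
\sum_{n\ge 0} p_\xi(48n+28)\,q^n \;\equiv\; \frac{f_6 f_{12}}{f_4}\pmod 9,
\]
which is a power series in $q^2$. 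Its odd part therefore vanishes identically modulo $9$, and no third dissection computation is needed at all. Your three-dissection plan could be pushed through, but only because the congruence $f_1^9\equiv f_3^3\pmod 9$ does the real work; tracking explicit $3$'s from \eqref{Eq11} and \eqref{Eq45} alone will not manufacture the required $9$.
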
		

\begin{proof}
We use \eqref{3_dissec_psi} to extract the terms of the form $q^{3n+1}$ from \eqref{xi_4n}. The resulting identity is
\begin{align*}
\sum_{n=0}^{\infty} p_{\xi}(12n+4)q^{3n+1} = 2q\frac{f_{6}^6f_9f_{18}}{f_3^5f_{12}^2},
\end{align*}
which, after dividing by $q$ and replacing $q^3$ by $q$, yields
\begin{align*}
\sum_{n=0}^{\infty} p_{\xi}(12n+4)q^{n} = 2\frac{f_{2}^6f_3f_{6}}{f_1^5f_{4}^2} = 2\frac{f_{2}^6f_{6}}{f_{4}^2} \frac{f_3}{f_1} \frac{1}{f_1^4}.
\end{align*}
Using \eqref{Eq29} and \eqref{Eq8}, we extract the even part on both sides of the above identity to obtain 
\begin{align*}
\sum_{n=0}^{\infty} p_{\xi}(24n+4)q^{n} & = 2\frac{f_{2}^{13}f_3^2f_{8}f_{12}^2}{f_1^{10}f_{4}^5f_6f_{24}} + 8q\frac{f_{3}^2f_{4}^6f_{24}}{f_1^6f_8f_{12}}  \\
& \equiv 2\frac{f_{2}^{13}f_{8}f_{12}^2}{f_{4}^5f_6f_{24}}\frac{1}{f_1f_3} + 8q\frac{f_{4}^6f_{24}}{f_8f_{12}} \frac{f_1^3}{f_3} \pmod{9}.
\end{align*}
Now we employ \eqref{Eq38} and \eqref{Eq45} to extract the odd part on both sides of the last congruence:
\begin{align*}
\sum_{n=0}^{\infty} p_{\xi}(48n+28)q^{n} & \equiv 2\frac{f_{1}^{9}f_6f_{12}}{f_{3}^3f_4} + 8\frac{f_{2}^9f_{12}}{f_4f_{6}^2} \equiv \frac{f_6f_{12}}{f_4} \pmod{9},
\end{align*}
which implies \eqref{Eq43}.
\end{proof}

\section{Concluding remarks}

Computational evidence indicates that $p_{\xi}(n)$ satisfies many other congruences.  The interested reader may wish to consider the following two conjectures.  

\begin{conjecture}
\label{Conj1}
$$\sum_{n=0}^\infty p_{\xi}(8n+3)q^n \equiv 2\sum_{n=0}^\infty q^{3n(n+1)/2} \pmod{3}$$
\end{conjecture}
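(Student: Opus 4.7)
My approach is to produce an explicit eta-quotient formula for the series $\sum_{n\ge 0}p_\xi(8n+3)q^n$ and then verify the conjecture as a congruence of eta-quotients modulo $3$. Starting from the 4-dissection \eqref{xi_4n+3}, a further 2-dissection extracts the desired subseries. The point that makes this feasible is that the mock theta term $f(q^6)$ appearing on the right of \eqref{xi_4n+3} is a power series in $q^6$, so it contributes only even powers of $q$; since $p_\xi(8n+3)$ lives in the odd-power part (the coefficient of $q^{2m+1}$ on the left of \eqref{xi_4n+3} is $2p_\xi(8m+3)$), the mock theta function drops out entirely from the extraction. Using \eqref{Eq17} and its image under $q\mapsto q^3$ to dissect $1/(f_1^2 f_3^2)$, and the square of the $q\mapsto q^3$ image of \eqref{Eq29} to dissect $1/f_3^8$, I would extract the odd part of \eqref{xi_4n+3}, divide by $2q$, and replace $q^2$ by $q$ to obtain
\[
\sum_{n\ge 0} p_\xi(8n+3)q^n = 2\frac{f_4^5 f_6^2 f_{12}^5}{f_1 f_3^6 f_8^2 f_{24}^2} + 4q\frac{f_6^{10}}{f_3^9} + 8q^2\frac{f_2^2 f_6^4 f_8^2 f_{24}^2}{f_1 f_3^6 f_4 f_{12}}.
\]
(Sanity check: the constant term is $2 = p_\xi(3)$.)

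Reducing modulo $3$ via $f_k^3\equiv f_{3k}\pmod 3$ (so $f_3^6\equiv f_9^2$ and $f_3^9\equiv f_{27}$) together with $4\equiv 1$ and $8\equiv 2\pmod 3$, the conjecture becomes the eta-quotient congruence
\[
2\frac{f_4^5 f_6^2 f_{12}^5}{f_1 f_9^2 f_8^2 f_{24}^2} + q\frac{f_6^{10}}{f_{27}} + 2q^2\frac{f_2^2 f_6^4 f_8^2 f_{24}^2}{f_1 f_9^2 f_4 f_{12}} \equiv 2\frac{f_6^2}{f_3}\pmod 3.
\]
To verify this, I would 3-dissect both sides. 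Since the right-hand side $2\psi(q^3)$ is supported only on powers $q^{3k}$, the $q^{3k+1}$ and $q^{3k+2}$ parts of the left-hand side must vanish modulo $3$, while the $q^{3k}$ part must reduce (after $q^3\mapsto q$) to $2\psi(q)=2f_2^2/f_1$. The tools here are the 3-dissection identities in Lemma \ref{lemma2}, augmented by the analogous 3-dissections of $1/f_1$, $1/f_4$, and $f_8/f_4$ that can be obtained from them by $q$-substitution. Observe that the non-zero-residue vanishings are morally the mod-$3$ analogues of the congruences $p_\xi(24n+11)\equiv 0\pmod 3$ and $p_\xi(24n+19)\equiv 0\pmod 3$ (the latter being \eqref{Eq12}), which suggests that the iterative self-reduction technique used to prove \eqref{Eq14} should settle those two cases.

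\textbf{Main obstacle.} The heart of the difficulty lies in the zero-residue case: one must pin down the exact cancellation pattern across the three summands of the explicit formula once each is 3-dissected, and then recognize the triangular-number theta series $\psi$ in the surviving contribution. Since the statement is recorded as a conjecture, it is plausible that the direct dissection route requires several nested layers. As a fall-back, I would invoke the theory of modular forms on a suitable congruence subgroup: once the common level and weight of both sides of the displayed mod-$3$ congruence are identified, Sturm's bound reduces the identity to a finite computer verification.
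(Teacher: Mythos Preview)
The paper does not prove this statement: it is presented in Section~6 explicitly as an open conjecture, with the remark that ``the interested reader may wish to consider'' it. There is therefore no paper proof to compare against; your proposal is an attempt at something the authors did not settle.

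Your reduction is sound as far as it goes. Extracting the odd part of \eqref{xi_4n+3} does kill the mock theta term $f(q^6)$, and your explicit eta-quotient expression
\[
\sum_{n\ge 0} p_\xi(8n+3)q^n = 2\frac{f_4^5 f_6^2 f_{12}^5}{f_1 f_3^6 f_8^2 f_{24}^2} + 4q\frac{f_6^{10}}{f_3^9} + 8q^2\frac{f_2^2 f_6^4 f_8^2 f_{24}^2}{f_1 f_3^6 f_4 f_{12}}
\]
checks out (the odd part of $f_6^{15}/(f_3^8 f_{12}^6)$ via the square of \eqref{Eq29} at $q\mapsto q^3$ gives $8q^3 f_{12}^{10}/f_6^9$, and the odd part of the third term via \eqref{Eq17} at $q$ and $q^3$ matches your two remaining summands). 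The subsequent reduction modulo~$3$ is also correct.

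The gap is exactly the one you flag: the displayed mod-$3$ eta-quotient identity is asserted, not proved. Your sketch of a 3-dissection argument is only a plan, and the three summands now involve $f_1, f_9, f_{27}$ simultaneously, so the dissections interact nontrivially; you have not shown that the $q^{3k+1}$ and $q^{3k+2}$ parts vanish or that the $q^{3k}$ part collapses to $2\psi(q)$. The Sturm-bound fallback is legitimate in principle, but you would first need to clear denominators and exhibit both sides as honest modular forms (not just eta-quotients with negative exponents) of a common weight and level on some $\Gamma_0(N)$ with character, and then actually perform the finite coefficient check. Until one of those routes is carried through, this remains a correct and useful reduction of the conjecture to a concrete eta-quotient congruence, but not a proof.
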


\begin{conjecture}
\label{Conj2}
$$\sum_{n=0}^\infty p_{\xi}(32n+12)q^n \equiv 6\sum_{n=0}^\infty q^{3n(n+1)/2} \pmod{9}$$
\end{conjecture}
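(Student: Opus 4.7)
The plan is to split Conjecture \ref{Conj2} into three subclaims indexed by the residue of $n$ modulo~$3$. Since $\sum_{m\ge 0} q^{3m(m+1)/2} = \psi(q^3) = f_6^2/f_3$ and $m(m+1)$ is always even, the right-hand side has support entirely in exponents divisible by~$3$. Splitting $\sum_n p_\xi(32n+12)\,q^n$ according to $n \pmod 3$ therefore reduces the conjecture to
\begin{align*}
\sum_{k\ge 0} p_\xi(96k+12)\, q^k & \equiv 6\,\frac{f_2^2}{f_1} \pmod 9, \\
p_\xi(96k+44) & \equiv 0 \pmod 9, \\
p_\xi(96k+76) & \equiv 0 \pmod 9,
\end{align*}
the last of which is already Theorem~\ref{thm:96_76}.

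For the $96k+12$ residue class, I would start from \eqref{xi_4n} and extract the $q^{3n}$ part.  The summands $4q^2 f_{12}^6/(f_3^2 f_6^3)$ and $q^2\omega(-q^3)$ both have $q$-exponents $\equiv 2 \pmod 3$ and therefore drop out, leaving only $f_2^4 f_6^5/(f_1^2 f_3^4 f_{12}^2) = \psi(q)^2 \cdot f_6^5/(f_3^4 f_{12}^2)$. Squaring \eqref{3_dissec_psi} and retaining only the piece of $\psi(q)^2$ whose exponents lie in $3\mathbb{Z}$ gives $f_6^2 f_9^4/(f_3^2 f_{18}^2)$; after replacing $q^3$ by $q$ one obtains the clean identity
$$\sum_{n\ge 0} p_\xi(12n)\, q^n \;=\; \frac{f_2^7 f_3^4}{f_1^6 f_4^2 f_6^2}.$$
Since $96k+12 = 12(8k+1)$, two further 2-dissections of this identity, via \eqref{Eq17}, \eqref{Eq29}, and \eqref{Eq9} in the manner of the proof of Theorem~\ref{thm:96_76}, extract $\sum_k p_\xi(96k+12)\,q^k$.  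Reducing the resulting eta-quotient modulo~$9$ via the congruence $f_a^9 \equiv f_{3a}^3 \pmod 9$ (which follows from $(1-x)^9 \equiv (1-x^3)^3 \pmod 9$ and is the engine of the proof of Theorem~\ref{thm:96_76}), together with its mod-$3$ precursor $f_a^3 \equiv f_{3a}$, the goal is to collapse the expression to $6 f_2^2/f_1$.

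For the $96k+44$ residue class, the $q^{3n+2}$ part of \eqref{xi_4n} genuinely involves the mock theta function $\omega$.  The plan is to pass instead to \eqref{xi_3n+2}, namely $\sum_n p_\xi(3n+2) q^n = \omega(q) + f_6^4/(f_2 f_3^2)$, and to extract the coefficient of $q^{32k+14}$ (so that $96k+44 = 3(32k+14)+2$).  The eta-quotient summand $f_6^4/(f_2 f_3^2)$ is handled by routine successive 2-dissections, whereas the $\omega(q)$ contribution is controlled using congruences for the associated partition function $p_\omega(n)$ from \cite{APSY, Wang}, sharpened from their native modulus up to modulus~$9$.

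The main obstacle is this second subclaim.  The published congruences for $p_\omega$ concentrate on modulus~$3$ (and on moduli $5,7$), whereas here a sharp mod-$9$ description of the progression $32k+14$ is needed, fine enough to cancel the explicit eta-quotient contribution identified above.  Once that delicate mock-theta-versus-eta-quotient matching modulo $9$ is pinned down, the remaining work reduces to mechanical eta-quotient dissection and mod-$9$ simplification via $f_a^9 \equiv f_{3a}^3 \pmod 9$.
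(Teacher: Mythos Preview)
There is nothing to compare against: the statement you are attempting is \emph{Conjecture}~\ref{Conj2}, which the paper explicitly leaves open in Section~6.  The authors do not prove it; they only remark that it would imply Theorem~\ref{thm:96_76} (the $96n+76$ case) and the companion congruence $p_\xi(96n+44)\equiv 0\pmod 9$.  Your three-way split according to $n\bmod 3$ therefore matches the paper's own commentary exactly, and your observation that the $96k+76$ subclaim is already Theorem~\ref{thm:96_76} is correct.

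Your derivation of
\[
\sum_{n\ge 0} p_\xi(12n)\,q^n \;=\; \frac{f_2^7 f_3^4}{f_1^6 f_4^2 f_6^2}
\]
from \eqref{xi_4n} and \eqref{3_dissec_psi} is valid (the two $q^2$-shifted summands in \eqref{xi_4n} are indeed supported on exponents $\equiv 2\pmod 3$).  One small slip: to pass from $p_\xi(12n)$ to $p_\xi(12(8k+1))=p_\xi(96k+12)$ requires \emph{three} successive 2-dissections, not two.

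The genuine gap is exactly where you locate it.  For the $96k+44$ subclaim, whether you approach via \eqref{xi_4n} (where the relevant residue class picks up $q^2\omega(-q^3)$) or via \eqref{xi_3n+2} (where it picks up $\omega(q)$), you must control the coefficients of $\omega$ modulo~$9$ on a specific arithmetic progression.  The results in \cite{APSY,Wang} give congruences for $p_\omega$ modulo $3$, $5$, $7$, etc., but not the mod-$9$ information you need here, and ``sharpening from their native modulus up to modulus~$9$'' is not a step one can expect to carry out mechanically.  This is precisely the obstruction that keeps Conjecture~\ref{Conj2} a conjecture in the paper; your proposal is an accurate roadmap but not yet a proof, and the missing ingredient is a mod-$9$ result for $\omega$ that does not currently exist in the cited literature.
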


Clearly, once proven, Conjectures \ref{Conj1} and \ref{Conj2} would immediately lead to infinite families of Ramanujan--like congruences.  Morever, Conjecture \ref{Conj2} would immediately imply Theorem \ref{thm:96_76} since $96n+76 = 32(3n+2)+12$ while the right--hand side of Conjecture \ref{Conj2} is clearly a function of $q^3.$  The same argument would imply that, for all $n\geq 0,$ 
$$p_{\xi}(96n+44)  \equiv 0 \pmod{9}$$
since $96n+44 = 32(3n+1)+12.$

\section*{Acknowledgment}

The first author was supported by S\~ao Paulo Research Foundation (FAPESP) (grant no. 2019/14796-8).

\

\noindent Universidade Federal de S\~ao Paulo, Av. Cesare M. G. Lattes, 1201, S\~ao Jos\'e dos Campos, SP, 12247--014, Brazil. \\
E-mail address: silva.robson@unifesp.br

\

\noindent Department of Mathematics and Statistics, University of Minnesota Duluth, Duluth, MN  55812, USA. \\
E-mail address: jsellers@d.umn.edu

\end{document}